\theoremstyle{plain}
    \newtheorem{thm}{Theorem}[section]
    \newtheorem{prop}[thm]{Proposition}
    \newtheorem{lemma}[thm]{Lemma}
    \newtheorem{corollary}[thm]{Corollary}
    \newtheorem{fact}[thm]{Fact}
    \newtheorem{subsec}[thm]{}
\theoremstyle{definition}
    \newtheorem{defn}[thm]{Definition}
\theoremstyle{remark}
        \newtheorem{remark}[thm]{Remark}
    \newtheorem{assume}[thm]{Assumption}
    \newtheorem{ack}[thm]{Acknowledgements}
\newenvironment{myeq}[1][]
{\stepcounter{thm}\begin{equation}\tag{\thethm}{#1}}
{\end{equation}}
\newcommand{\mydiagram}[2][]
{\stepcounter{thm}\begin{equation}
     \tag{\thethm}{#1}\vcenter{\xymatrix{#2}}\end{equation}}
\newenvironment{mysubsection}[2][]
{\begin{subsec}\begin{upshape}\begin{bfseries}{#2.}
\end{bfseries}{#1}}
{\end{upshape}\end{subsec}}
\newcommand{\sect}{\setcounter{thm}{0}\section}
\newcommand{\wh}{\ -- \ }
\newcommand{\wwh}{-- \ }
\newcommand{\w}[2][ ]{\ \ensuremath{#2}{#1}\ }
\newcommand{\ww}[1]{\ \ensuremath{#1}}
\newcommand{\wwb}[1]{\ \ensuremath{(#1)}-}
\newcommand{\wb}[2][ ]{\ (\ensuremath{#2}){#1}\ }
\newcommand{\wref}[2][ ]{\ (\ref{#2}){#1}\ }
\newcommand{\hsp}{\hspace*{7 mm}}
\newcommand{\hsm}{\hspace*{2 mm}}
\newcommand{\vsn}{\vspace{1 mm}}
\newcommand{\vsm}{\vspace{3 mm}}
\newcommand{\hra}{\hookrightarrow}
\newcommand{\xra}[1]{\xrightarrow{#1}}
\newcommand{\xepic}[1]{\xrightarrow{#1}\hspace{-5 mm}\to}
\newcommand{\lora}{\longrightarrow}
\newcommand{\lra}[1]{\langle{#1}\rangle}
\newcommand{\Rw}{\Rightarrow}
\newcommand{\rest}[1]{\lvert_{#1}}
\newcommand{\Aut}{\operatorname{Aut}}
\newcommand{\colim}{\operatorname{colim}}
\newcommand{\he}{\operatorname{h.e.}}
\newcommand{\Hom}{\operatorname{Hom}}
\newcommand{\Id}{\operatorname{Id}}
\newcommand{\lenG}{\operatorname{len}_{G}}
\newcommand{\Obj}{\operatorname{Obj}\,}
\newcommand{\op}{\sp{\operatorname{op}}}
\newcommand{\pt}{\operatorname{pt}}
\newcommand{\rst}{\operatorname{rest}}
\newcommand{\sk}{\operatorname{sk}}
\newcommand{\map}{\operatorname{Map}}
\newcommand{\mapg}{\map\sb{G}}
\newcommand{\hy}[2]{{#1}\text{-}{#2}}
\newcommand{\Abgp}{{\EuScript AbGp}}
\newcommand{\C}{{\EuScript C}}
\newcommand{\eE}{{\EuScript E}}
\newcommand{\EH}[1]{\eE\sp{H}\sb{#1}}
\newcommand{\tEH}[1]{\widetilde{\eE}\sp{H}\sb{#1}}
\newcommand{\cO}{{\EuScript O}}
\newcommand{\eS}{{\EuScript S}}
\newcommand{\Sa}{\eS_{\ast}}
\newcommand{\Top}{{\EuScript T}}
\newcommand{\GT}{\hy{G}{\Top}}
\newcommand{\uC}{\underline{C}}
\newcommand{\uD}{\underline{D}}
\newcommand{\uf}{\underline{f}}
\newcommand{\uh}{\underline{h}}
\newcommand{\uth}[1]{\widetilde{\uh\sp{#1}}}
\newcommand{\uK}{\underline{K}}
\newcommand{\uM}{\underline{M}}
\newcommand{\uS}{\underline{S}}
\newcommand{\us}{\underline{s}}
\newcommand{\uW}{\underline{W}}
\newcommand{\uX}{\underline{X}}
\newcommand{\uY}{\underline{Y}}
\newcommand{\uZ}{\underline{Z}}
\newcommand{\BG}{\bB\Gamma}
\newcommand{\EG}{\bE\Gamma}
\newcommand{\HB}[4]{\bH^{#1}_{#2}({#3};\,{#4})}
\newcommand{\HL}[4]{H^{#1}_{#2}({#3};\,{#4})}
\newcommand{\tH}[4]{\tilde{H}^{#1}_{#2}({#3};\,{#4})}
\newcommand{\KM}[1]{\bK(M,{#1})}
\newcommand{\KGM}[1]{\bK_{\Gamma}(M,{#1})}
\newcommand{\MH}{M_{H}}
\newcommand{\ML}{M_{L}}
\newcommand{\EX}[2]{\bE{#1}\times{#2}}
\newcommand{\EXp}[2]{\bE{#1}\ltimes{#2}}
\newcommand{\Br}[2]{{#1}_{h{#2}}}
\newcommand{\Bor}[2]{\bE{#2}\times_{#2}{#1}}
\newcommand{\Brp}[2]{{#1}^{\ast}_{h{#2}}}
\newcommand{\Borp}[2]{\bE{#2}\ltimes_{#2}{#1}}
\newcommand{\XWH}{\bE\WH\times_{\WH}\XHH}
\newcommand{\XWL}{\bE\WL\times_{\WL}\XLL}
\newcommand{\XWLz}{\bE W_{L_{0}}\times_{W_{L_{0}}}\XLLz}
\newcommand{\WH}{W_{H}}
\newcommand{\WK}{W_{K}}
\newcommand{\WL}{W_{L}}
\newcommand{\bWHK}{\bar{W}^{H}_{K}}
\newcommand{\bWHL}{\bar{W}^{H}_{L}}
\newcommand{\WKH}{W^{K}_{H}}
\newcommand{\WLH}{W^{L}_{H}}
\newcommand{\WHLp}{W^{H'}_{L}}
\newcommand{\WLHp}{W^{L'}_{H}}
\newcommand{\XHH}{\bX^{H}_{H}}
\newcommand{\XHHp}{\bX^{H'}_{H'}}
\newcommand{\XLL}{\bX^{L}_{L}}
\newcommand{\XLLz}{\bX^{L_{0}}_{L_{0}}}
\newcommand{\XLLp}{\bX^{L'}_{L'}}
\newcommand{\F}{\mathcal{F}}
\newcommand{\hF}{\widehat{\F}}
\newcommand{\OG}{\cO_{G}}
\newcommand{\OWH}{\cO_{\WH}}
\newcommand{\cP}{\mathcal{P}}
\newcommand{\TE}[1]{\Top^{\tEH{#1}}}
\newcommand{\TF}[1]{\Top\sp{\F\sb{#1}}}
\newcommand{\TOG}{\Top\sp{\OG\op}}
\newcommand{\Q}[1]{\mathcal{Q}\sb{#1}}
\newcommand{\cS}{\mathcal{S}}
\newcommand{\hS}{\widehat{\cS}}
\newcommand{\tSH}[1]{\widetilde{\cS}\sp{H}\sb{#1}}
\newcommand{\cV}{\mathcal{V}}
\newcommand{\LH}[1]{\Lambda\sp{H}\sb{#1}}
\newcommand{\cj}[2]{\phi\sp{#1}\sb{#2}}
\newcommand{\co}[2]{(\phi\sp{#1}\sb{#2})\op}
\newcommand{\cp}[2]{(\widetilde{\phi}\sp{#1}\sb{#2})\op}
\newcommand{\hf}{\widehat{f}}
\newcommand{\hD}{\widehat{D}}
\newcommand{\hM}{\widehat{M}_{H}}
\newcommand{\hX}{\widehat{X}}
\newcommand{\hY}{\widehat{Y}}
\newcommand{\hZ}{\widehat{Z}}
\newcommand{\tU}[1]{\widetilde{U}\sb{#1}}
\newcommand{\bB}{\mathbf{B}}
\newcommand{\bD}[1]{\mathbf{D}\sp{#1}}
\newcommand{\bE}{\mathbf{E}}
\newcommand{\bH}{\mathbf{H}}
\newcommand{\bK}{\mathbf{K}}
\newcommand{\bS}[1]{\mathbf{S}\sp{#1}}
\newcommand{\bX}{\mathbf{X}}
\newcommand{\bY}{\mathbf{Y}}
\newcommand{\bZ}{\mathbf{Z}}
\begin{document}
%
%
\title{A spectral sequence for Bredon cohomology}
\author{David Blanc}
\author{Debasis Sen}
\address{Department of Mathematics\\ University of Haifa\\ 31905 Haifa\\ Israel}
\email{blanc@math.haifa.ac.il,\ sen\_deba@math.haifa.ac.il}

\date{\today}

\subjclass[2010]{Primary: 55N91; \ secondary: 55P91, 55N25, 55T99}
\keywords{Bredon cohomology, cohomology with local coefficients,
  equivariant homotopy theory, spectral sequence}

\begin{abstract}
For any finite group $G$, we construct a spectral sequence for
computing the Bredon cohomology of a $G$-CW complex $\bX$, starting with
the cohomology of \w{\bX^{H}/\bigcup_{K>H}\bX^{K}} with suitable local coefficients, for various \w[.]{H\leq G}
\end{abstract}

\maketitle

\setcounter{section}{0}

%
%
\section*{Introduction}
\label{cint}

Equivariant homotopy theory is the study of \emph{$G$-spaces} \wh topological spaces equipped with a continuous action of a group $G$ \wh using homotopy-theoretic methods. In \cite{BredE}, Bredon proposed a framework for studying a $G$-space $\bX$ using the system of fixed point sets \w{\bX^{H}} for
various subgroups \w[.]{H\leq G} In particular, he introduced an
equivariant cohomology theory \w[,]{\HB{\ast}{G}{\bX}{\uM}} for any
coefficient system \w{\uM:\OG\op\to\Abgp} defined on the orbit category
\w{\OG} (cf.\ \S \ref{socat}).

Bredon cohomology has become one of the major theoretical tools of
equivariant homotopy theory. However, it is notoriously
difficult to calculate. Our goal here is to describe a spectral sequence
converging to \w[,]{\HB{\ast}{G}{\bX}{\uM}} for a finite group $G$,
starting from ``local'' information at the various fixed point sets
\w[.]{\bX^{H}}
In fact, the spectral sequence takes the form
$$
E_{1}^{k,i}~=~\bigoplus_{[G/H]}
\tH{i}{\WH}{\XWH}{\MH}~~\Longrightarrow~~\HB{i}{G}{\bX}{\uM}~,
$$
\noindent where \w{\tH{\ast}{\Gamma}{-}{M}} are reduced local
cohomology groups, \w[,]{\WH:=N_{G}H/H} and
\w[.]{\XHH=\bX^{H}/\bigcup_{K>H}\bX^{K}} See Theorem \ref{tss} below.

The idea for this spectral sequence is based on a more general
construction of local-to-global spectral sequences for the cohomology
of a diagram \w{\uX:I\to\C} (cf.\ \cite{BJTurL}, and compare
\cite{JPirCA,MRobiC}); however, the latter only works for
\emph{directed} indexing categories $I$, so it does not apply to
Bredon cohomology. Note also that Moerdijk and Svensson have a
different construction of a spectral sequence for computing Bredon
cohomology (see \cite{MSvenS}).

One might expect the spectral sequence constructed here to start from
the Bredon cohomology \w{\HB{\ast}{\WH}{\bX^{H}}{\hM}} at the various
fixed point sets. We were not able to obtain such a spectral sequence
directly. However, we do have another spectral sequence of the form:
$$
E_{1}^{m,i}~=~\bigoplus_{G/L}
\tH{i}{\bWHL}{\bE\WL\times_{\bWHL}\XLL}{M_{N_{G}H\cap L}}~~
\Longrightarrow~~\HB{i}{\WH}{\bX^{H}}{\hM}~,
$$
\noindent thus allowing us to compute these fixed-point-set Bredon
cohomology groups, too, from the reduced cohomology with local coefficients.
See Theorem \ref{tssfps}.

\begin{remark}\label{rfinite}
There is a version of Bredon cohomology for any topological group $G$ (in particular, for a compact Lie group), based on choosing a family of (closed) subgroups of $G$ (see \cite{DKanEq,IllmE1}). However, in this case the orbit category \w{\OG} is itself topologically (or simplicially) enriched, so the
diagram systems involved are more complicated. Our methods do not
work in this situation, in general, because the filtration
\wref{eqfilter} that we use need not be exhaustive. Therefore,
throughout this paper we assume that $G$ is a \emph{finite} group.

We would like to thank Wolfgang L\"{u}ck for pointing out that our spectral sequence is similar in spirit to the much more general $p$-chain spectral sequence of \cite{DLuckP}.  However, our construction is different, and in this special case we can describe the \ww{E_{1}}-term and differential quite explicitly.
\end{remark}

\begin{mysubsection}{Notation and conventions}\label{snac}
All mapping spaces \w{\map(-,-)} are simplicial sets, and the
category of simplicial sets will be denoted by $\eS$.
The category of topological spaces will be denoted by $\Top$, and its
objects will be denoted by boldface letters: $\bX$, \w[.]{\bY\dotsc}
We use \w{\bH^{\ast}_{G}} for Bredon cohomology, to distinguish it from
cohomology with local coefficients, denoted simply by \w[.]{H^{\ast}_{\Gamma}}
\end{mysubsection}

\begin{mysubsection}{Organization}\label{sorg}
In Section \ref{cbc} we provide some background on $G$-spaces, the
orbit category \w[,]{\OG} and Bredon cohomology. In Section \ref{cfoc}
we define the filtration on the orbit category which is the basis for
our spectral sequences. In Section \ref{clocal} we recall some basic
facts about cohomology with local coefficients in our connection,
and in Section \ref{csss} we construct the two spectral sequences.
\end{mysubsection}

\begin{ack}
We would like to thank the referee for his or her comments and suggestions.
\end{ack}

%
%
\sect{Bredon cohomology}
\label{cbc}

Bredon introduced a cohomology theory for $G$-spaces, using the
following notions:

\begin{mysubsection}{The orbit category}\label{socat}
Let $G$ be a fixed (finite) group. A \emph{basic $G$-set} is the set of
left cosets \w{G/H} for some subgroup \w[,]{H\leq G} with the left $G$-action.
The \emph{orbit category} \w{\OG} of $G$ has the basic $G$-sets as
objects, and $G$-equivariant maps as morphisms.

Any map \w{G/H\to G/K} in \w{\OG} can be factored as an
epimorphism \w{G/H\xepic{i_{\ast}} G/K^{a^{-1}}} (induced by the inclusion
\w[),]{i:H\hra K^{a^{-1}}} followed by an isomorphism
\w[.]{\cj{K^{a^{-1}}}{a}:G/K^{a^{-1}}\to G/K} Here
\w[,]{K^{a^{-1}}=aKa^{-1}, a\in G} and \w{\cj{K^{a^{-1}}}{a}} is
induced by the right translation \w{R_{a}:G\to G} (with
\w[),]{R_{a}(g)=ag} that is:
\begin{myeq}[\label{eqrightmult}]
\cj{K^{a^{-1}}}{a}~:~g K^{a^{-1}}~=~gaKa^{-1} ~
\longmapsto~ gaK
\end{myeq}
\noindent (see \cite[I, \S 3]{BredE}). This map can also be decomposed as \w[,]{i^{a}_{\ast}\circ\cj{H}{a}} where \w{\cj{H}{a}:G/H\to G/H^{a}} is induced by
\w{R_{a}} and \w{i^a:H^{a}\hra K} is the conjugate of \w{i:H\hra K^{a^{-1}}} by
$a$.

\begin{fact}\label{fcommute}
Two maps \w{\cj{K^{a^{-1}}}{a}\circ i_{\ast}}
and \w{\cj{K^{b^{-1}}}{b}\circ j_{\ast}} from \w{G/H} to \w{G/K}
are the same in \w{\OG} if and only if \w{a^{-1}b\in K} (so
\w{K^{a^{-1}}=K^{b^{-1}}} and \w{\cj{K^{b^{-1}}}{ba^{-1}}} is the
  identity). Therefore, the automorphism group \w{\WH:=\Aut_{\OG}(G/H)} of \w{G/H\in\OG} is \w[,]{N_{G}H/H} where \w{N_{G}H} is the normalizer of $H$ in $G$.
\end{fact}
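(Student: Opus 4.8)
The plan is to reduce the entire statement to the elementary classification of equivariant maps between orbits, which is the only real input. First I would recall that a $G$-map $f\colon G/H\to G/K$ is completely determined by its single value $f(eH)=\alpha K\in G/K$, since equivariance forces $f(gH)=g\alpha K$; and conversely the rule $gH\mapsto g\alpha K$ is well defined exactly when $h\alpha K=\alpha K$ for all $h\in H$, i.e.\ $\alpha^{-1}H\alpha\subseteq K$, equivalently $H\subseteq K^{\alpha^{-1}}$. Moreover two cosets $\alpha K$ and $\beta K$ yield the same $G$-map if and only if $\alpha K=\beta K$, i.e.\ $\alpha^{-1}\beta\in K$.

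Next I would simply evaluate the two composites at $eH$. Since $i_{\ast}\colon G/H\to G/K^{a^{-1}}$ is the projection $gH\mapsto gK^{a^{-1}}$, and $\cj{K^{a^{-1}}}{a}$ sends $eK^{a^{-1}}\mapsto aK$ by \wref{eqrightmult}, the composite $\cj{K^{a^{-1}}}{a}\circ i_{\ast}$ is the $G$-map taking $eH$ to $aK$; likewise $\cj{K^{b^{-1}}}{b}\circ j_{\ast}$ takes $eH$ to $bK$. By the previous paragraph these agree in $\OG$ if and only if $aK=bK$, i.e.\ $a^{-1}b\in K$, which is the asserted equivalence. For the parenthetical claims, put $k:=a^{-1}b\in K$: then $K^{b^{-1}}=bKb^{-1}=a(kKk^{-1})a^{-1}=aKa^{-1}=K^{a^{-1}}$; and since $(K^{b^{-1}})^{ba^{-1}}=(ab^{-1})(bKb^{-1})(ba^{-1})=aKa^{-1}=K^{b^{-1}}$, the map $\cj{K^{b^{-1}}}{ba^{-1}}$ is an endomorphism of $G/K^{b^{-1}}$ sending $gK^{b^{-1}}\mapsto g(ba^{-1})K^{b^{-1}}$, and it is the identity because $ba^{-1}\in bKb^{-1}=K^{b^{-1}}$ (as $b^{-1}(ba^{-1})b=k\in K$).

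For the closing clause I would specialize to $K=H$. By the canonical factorization of a map in $\OG$ as an epimorphism followed by an isomorphism, recalled just before \wref{eqrightmult}, every endomorphism of $G/H$ has the form $\cj{H^{a^{-1}}}{a}\circ i_{\ast}$ with $i\colon H\hra H^{a^{-1}}$; as $i_{\ast}$ is always surjective and $\cj{H^{a^{-1}}}{a}$ is always bijective, this composite is an automorphism precisely when $i_{\ast}$ is injective, i.e.\ when $H=H^{a^{-1}}=aHa^{-1}$, i.e.\ $a\in N_{G}H$ — in which case $i$ is the identity and the endomorphism is just $\cj{H}{a}$. Thus $\Aut_{\OG}(G/H)=\{\,\cj{H}{a}:a\in N_{G}H\,\}$, and by the first paragraph $\cj{H}{a}=\cj{H}{b}$ iff $a^{-1}b\in H$; a one-line computation from \wref{eqrightmult} then shows that $a\mapsto\cj{H}{a^{-1}}$ descends to a group isomorphism $N_{G}H/H\cong\Aut_{\OG}(G/H)=\WH$.

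There is no genuine obstacle here — the proof is pure bookkeeping. The only things that demand attention are keeping the conjugation convention $K^{a^{-1}}=aKa^{-1}$ used consistently throughout, and noticing that $a\mapsto\cj{H}{a}$ is itself an anti-homomorphism, so the identification $\WH\cong N_{G}H/H$ is best realized through $a\mapsto\cj{H}{a^{-1}}$ (which is harmless, since a group is canonically isomorphic to its opposite via inversion).
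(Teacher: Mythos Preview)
Your argument is correct and is exactly the standard elementary verification. Note, however, that the paper states this as a \emph{Fact} without supplying any proof at all \wh it is treated as well known (see, e.g., \cite[I, \S 3]{BredE} or \cite[I.1]{DiecTG}). So there is nothing in the paper to compare your proof against; your write-up simply fills in the routine details the authors chose to omit. The one cosmetic point you flag yourself \wh that \w{a\mapsto\cj{H}{a}} is an anti-homomorphism, so the isomorphism \w{N_{G}H/H\cong\WH} is most cleanly realized via \w{a\mapsto\cj{H}{a^{-1}}} \wh is indeed the only thing requiring care, and you handle it correctly.
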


Note that if $a$ is in \w[,]{K^{a^{-1}}} the right multiplication
\w{R_{a}} induces the \emph{identity} map
\w[,]{\cj{K^{a^{-1}}}{a}:G/K^{a^{-1}}\to G/K} even though the conjugation isomorphisms \w{\rho^{H}_{a}:H\to H^{a}} and \w{\rho^{K^{a^{-1}}}_{a}:K^{a^{-1}}\to K} may be non-trivial.
\end{mysubsection}

\begin{mysubsection}{$G$-spaces}\label{sgspaces}
For any (finite) group $G$, a \emph{$G$-space} is a topological space
\w{\bX\in\Top} equipped with a left $G$-action. The category of
$G$-spaces with $G$-equivariant continuous maps, simply called
\emph{$G$-maps}, will be denoted by \w[.]{\GT} We write \w{\bX^{H}}
for the \emph{fixed point set}
\w{\{x\in\bX~:hx=x \ \forall h\in H\}} of $X$ under a subgroup
\w[.]{H\leq G}

The notion of $G$-CW complexes (for topological groups) was introduced in \cite{MatuG,IllmE1}: a $G$-\emph{CW complex} $\bX$ is the union of sub
$G$-spaces \w{\bX^{n}} such that
\w{\bX^{0}} is a disjoint union of basic $G$-sets \w[,]{G/H} and
\w{\bX^{n+1}} is obtained from \w{\bX^{n}} by attaching \emph{$G$-cells}
of the  form \w{G/H\times\bD{n+1}} (where \w{\bD{n+1}} is a
\wwb{n+1}disc with boundary \w[)]{\bS{n}} with attaching $G$-maps
\w[.]{G/H\times\bS{n}\to\bX^{n}} For finite $G$, this is equivalent to
$\bX$ being a CW-complex on which $G$ acts cellularly (see
\cite{DiecTG}). Subcomplexes and relative $G$-CW complexes
are defined in the obvious way. For any $G$-space $\bX$, there is a
$G$-CW complex \w{\widehat\bX} and a weak $G$-homotopy equivalence
\w[.]{\gamma:\widehat\bX\to\bX}

For any collection $\F$ of subgroups of $G$ closed under
conjugation, there is a simplicial model category structure on
\w[,]{\GT} due to Dwyer and Kan, in which
\begin{enumerate}
\renewcommand{\labelenumi}{(\roman{enumi})\ }
\item A $G$-map \w{f:\bX\to\bY} is a \emph{weak equivalence}
  (respectively, a \emph{fibration}) if for each \w[,]{H\in\F}
  the restriction \w{f\rest{\bX^{H}}}  is a weak equivalence
  (respectively, a Serre fibration).
\item A $G$-map \w{f:\bX\to\bY} is a \emph{cofibration} if it is a
  retract of a (transfinite) composite of inclusions of relative
  $G$-CW pairs (see \cite[\S 2.1(Q1)]{DKanSR}).
\item The function complex \w{\mapg(\bX,\bY)} for the
  simplicial structure on \w{\GT} is defined by:
\w{\mapg(\bX,\bY)_{n}:=\Hom_{\GT}(\bX\times\Delta[n],\bY)} (with a trivial
  $G$-action on \w[).]{\Delta[n]}
\end{enumerate}
\noindent See \cite[\S 2]{DKanSR}.
\end{mysubsection}

\begin{mysubsection}{$\OG$-diagrams}\label{sgdiagrams}
Bredon's approach to $G$-equivariant homotopy theory, extended by
Elmendorf in \cite{ElmS} to compact Lie groups, reduces the study of
a $G$-space $\bX$ to the system of fixed point sets under the subgroups
of $G$:

For any category $\C$, an \ww{\OG\op}-\emph{diagram} in $\C$ is a
functor \w[,]{\uX:\OG\op\to\C} and the category of all such diagrams
will be denoted by \w[.]{\C^{\OG\op}}
When $\C$ is a simplicial model category (cf.\ \cite[II, \S 2]{QuiH}),
\w{\C^{\OG\op}} has a projective simplicial model category
structure in which a map \w{f:\uX\to\uY} of \ww{\OG\op}-diagrams is a
weak equivalence (respectively, a fibration) if for each \w[,]{H\leq G}
\w{f(G/H):\uX(G/H)\to\uY(G/H)} is a weak equivalence (respectively, a
fibration). The mapping spaces are defined using the simplicial
structure in $\C$ by \w{\map_{\C^{\OG\op}}(\uX,\uY)_{n}:=
\Hom_{\C^{\OG\op}}(\uX\otimes\Delta[n],\uY)_{n}}
(cf.\ \cite[\S 1.3]{DKanSR} and compare \cite{PiacH}).
\end{mysubsection}

\begin{mysubsection}{$\OG$-diagrams in $\Top$}\label{sgdiagramstop}
When \w[,]{\C=\Top} the fixed point set functor \w[,]{\Phi:\GT\to\TOG}
sending a $G$-space $\bX$ to the diagram \w{\Phi\bX:\OG\op\to\Top} defined:
\begin{myeq}[\label{eqsingdiag}]
(\Phi\bX)(G/H)~:=~\bX^{H}~,
\end{myeq}
\noindent has a left adjoint \w{\Psi:\TOG\to\GT} (see
\cite[Theorem 1]{ElmS}). We shall usually denote \w{\Phi\bX} by $\uX$.

In fact, this adjoint pair constitutes a simplicial Quillen
equivalence between \w{\GT} and \w[.]{\TOG} See
\cite[Theorem 3.1]{DKanSR} for \w{\C=\eS} (with
\w[).]{\uX(G/H)~:=~\map_{G}(G/H,\bX)} Using the
``singular-realization'' adjoint pair
\w[,]{\eS\rightleftharpoons\Top} one can translate the Dwyer-Kan
result to our context.

For any topological space $\bZ$, with trivial $G$-action, the
associated \emph{basic $G$-spaces} are those of the form
\w{G/K\times\bZ} \wb[.]{K\leq G} We denote the corresponding fixed
point diagrams \w{\Phi(G/K\times\bZ)} by \w[,]{\uZ_{G/K}\in\TOG} so
\begin{myeq}[\label{eqncell}]
\uZ_{G/K}(G/H)~:=~\coprod_{\psi\op:G/K\to G/H~\text{in}~\OG\op}~\bZ_{\psi}~,
\end{myeq}
\noindent where \w{\bZ_{\psi}} is a copy of $\bZ$,
and the structure map \w{\uZ_{G/K}(\phi\op)} sends \w{\bZ_{\psi}} by the
identity homeomorphism to \w[.]{\bZ_{\psi\circ\phi}}
In particular, \w{\bZ_{\Id}} is the copy of $\bZ$ indexed by
\w{\Id:G/K\to G/K} in \w[,]{\uZ_{G/K}(G/K)} and we have:
\end{mysubsection}

\begin{fact}\label{funique}
For any \w[,]{\uY\in\TOG} a map of \ww{\OG\op}-diagrams
\w{\uf:\uZ_{G/K}\to\uY} is uniquely determined by a map of spaces
\w[.]{f:\bZ_{\Id}=\bZ\to\uY(G/K)}
\end{fact}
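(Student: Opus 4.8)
The plan is to recognize Fact~\ref{funique} as an instance of the Yoneda lemma. First I would unwind \wref{eqncell}: the diagram $\uZ_{G/K}=\Phi(G/K\times\bZ)$ assigns to each $G/H$ the disjoint union $\coprod_{\psi}\bZ_{\psi}$, indexed by the set $\Hom_{\OG}(G/H,G/K)$ of morphisms in $\OG$ (equivalently, by the points of $(G/K)^{H}$), with structure maps acting by precomposition in $\OG$. Thus $\uZ_{G/K}$ is the free $\OG\op$-diagram in $\Top$ generated by the space $\bZ$ placed at the object $G/K$, and the assertion amounts to the bijection $\Hom_{\TOG}(\uZ_{G/K},\uY)\cong\Hom_{\Top}(\bZ,\uY(G/K))$.

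Concretely, I would send a map $\uf\colon\uZ_{G/K}\to\uY$ to the composite $f\colon\bZ=\bZ_{\Id}\hra\uZ_{G/K}(G/K)\xra{\uf(G/K)}\uY(G/K)$. For uniqueness I would apply naturality of $\uf$ with respect to an arbitrary morphism $\psi\colon G/H\to G/K$ of $\OG$: by \wref{eqncell} the structure map $\uZ_{G/K}(\psi\op)$ carries $\bZ_{\Id}$ by the identity homeomorphism onto the summand $\bZ_{\psi}\subseteq\uZ_{G/K}(G/H)$, so the naturality square forces $\uf(G/H)\rest{\bZ_{\psi}}=\uY(\psi\op)\circ f$. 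Since every point of $\uZ_{G/K}(G/H)$ lies in some such $\bZ_{\psi}$, this pins $\uf$ down. For existence I would turn that formula into a definition, $\uf(G/H)\rest{\bZ_{\psi}}:=\uY(\psi\op)\circ f$, and check naturality: for $\phi\colon G/H\to G/H'$ in $\OG$ and a summand $\bZ_{\psi}$ of $\uZ_{G/K}(G/H')$, both legs of the square restrict on $\bZ_{\psi}$ to $\uY((\psi\circ\phi)\op)\circ f=\uY(\phi\op)\circ\uY(\psi\op)\circ f$, which holds because $\uY$ is a functor on $\OG\op$.

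I do not expect a genuine obstacle here; the only point needing care is bookkeeping with the opposite category --- keeping straight that the index set in \wref{eqncell} is $\Hom_{\OG}(G/H,G/K)$, that the structure maps of $\uZ_{G/K}$ act by precomposition whereas those of $\uY$ act contravariantly, and hence that the two naturality conditions are compatible as above. Continuity is automatic, since every map in sight is a coproduct of copies of a single continuous map.
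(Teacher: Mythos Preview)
Your proposal is correct and follows essentially the same approach as the paper: both identify $\uf(G/H)$ on the summand $\bZ_{\psi}$ as $\uY(\psi\op)\circ f$, which is exactly the paper's one-line proof. You simply add the conceptual framing as a Yoneda-type freeness statement and spell out the uniqueness and existence (naturality) checks that the paper leaves implicit.
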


\begin{proof}
The summand \w{\bZ_{\psi}} in \w{\uZ_{G/K}(G/H)} is sent
to \w{\uY(G/H)} by \w[.]{\uY(\psi)\circ f}
\end{proof}

\begin{mysubsection}{Cellular $\OG$-diagrams}\label{scellular}
In particular, an \emph{$n$-cell} in \w{\TOG} is a diagram of the form
\w[,]{\uD^{n}_{G/K}:=\Phi(\bD{n}\times G/K)} where \w{\bD{n}} is an
$n$-cell in $\Top$, and similarly for the $n$-sphere \w[.]{\uS^{n}_{G/K}}
A \emph{cellular complex} in \w{\TOG} is a diagram
\w{\uX=\colim_{p}\uX^{p}} constructed inductively by a process of
``attaching cells'': i.e.,
\begin{myeq}[\label{eqattach}]
\uX^{p}~=~\uX^{p-1}~\cup_{(f_{\alpha})_{\alpha}}~
\coprod_{\alpha\in I_{p}}~\uD^{n_{\alpha}}_{G/K_{\alpha}}
\end{myeq}
\noindent for some indexing set \w{I_{p}} and diagram maps
\w[.]{f_{\alpha}:\uS^{n_{\alpha}-1}_{G/K_{\alpha}}\to\uX^{p-1}}
There is also a notion of a \emph{relative} cellular complex, and the
cofibrations in the model category \w{\TOG} are retracts of
inclusions of a relative cellular complex. See
\cite[Theorem 2.2]{DKanSR}.

The notion of a cellular diagram can be defined more generally \wh see
\cite[\S 3]{PiacH}.
\end{mysubsection}

\begin{fact}[cf.\ \protect{\cite{ElmS}}]\label{fgcw}
For any $G$-space $\bX$ and \w[,]{\uX\in\TOG} \w{\Psi\Phi\bX} is
a $G$-CW complex, \w{\Phi\Psi\uX} is a cellular diagram, the unit
\w{\bX\to\Psi\Phi\bX} is a $G$-weak homotopy equivalence, and the counit
\w{\Phi\Psi\uX\to\uX} is a weak equivalence of diagrams.
\end{fact}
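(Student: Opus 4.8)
The plan is to recognize this as Elmendorf's theorem \cite[Theorem 1]{ElmS}, sharpened by the Dwyer--Kan model structure \cite[Theorem 3.1]{DKanSR}; the work is mostly in reading off the point-set conclusions from Elmendorf's explicit model for $\Psi$, since the weak-equivalence statements are essentially the content of the Quillen equivalence recalled in \S\ref{sgdiagramstop}. The first step is to identify $\Psi$ on basic diagrams: combining the adjunction $\Psi\dashv\Phi$ with Fact \ref{funique}, for every space $\bZ$ and $\bY\in\GT$ there are natural bijections $\Hom_{\GT}(\Psi\uZ_{G/K},\bY)\cong\Hom_{\Top}(\bZ,\bY^{K})\cong\Hom_{\GT}(G/K\times\bZ,\bY)$, whence $\Psi\uZ_{G/K}\cong G/K\times\bZ$ naturally. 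In particular $\Psi$ sends the cell $\uD^{n}_{G/K}$ and its boundary $\uS^{n-1}_{G/K}$ to the $G$-cell $G/K\times\bD{n}$ and to $G/K\times\bS{n-1}$, compatibly with the inclusion; and, being a left adjoint, $\Psi$ preserves colimits, so applying it to a cell presentation \eqref{eqattach} of a (relative) cellular diagram $\uX=\colim_{p}\uX^{p}$ exhibits $\Psi\uX$ as a (relative) $G$-CW complex, with $G$-cells $G/K_{\alpha}\times\bD{n_{\alpha}}$ attached along the $G$-maps $\Psi f_{\alpha}$.

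Next I would pin $\Psi$ down to be Elmendorf's functor, i.e.\ the realization of the two-sided bar construction over $\OG$: by the previous step and the co-Yoneda lemma it is left adjoint to $\Phi$, and (after the singular--realization passage of \S\ref{sgdiagramstop}) $\Psi\uY$ is the realization of a simplicial $G$-space whose levels are coproducts of basic $G$-spaces $G/H\times\bZ$, hence $\Psi\uY$ is a $G$-CW complex for \emph{every} $\uY\in\TOG$; in particular so is $\Psi\Phi\bX$. Dually, since each fixed-point functor $(-)^{H}$ commutes with the pushouts along $G$-cofibrations and the sequential colimits along closed inclusions used to build $\Psi\uX$, applying $\Phi$ to that presentation yields a cell presentation in the sense of \S\ref{scellular}, so $\Phi\Psi\uX$ is a cellular diagram.

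For the unit and counit: the bar-construction functor $\Psi$ preserves \emph{all} weak equivalences (a levelwise weak equivalence of simplicial $G$-spaces realizes to one) and every object of $\GT$ is fibrant, so the derived unit and counit of the Quillen equivalence of \S\ref{sgdiagramstop} agree with the underived ones and both are weak equivalences; more concretely, applying $(-)^{K}$ to the counit $\Psi\Phi\bX\to\bX$ identifies it with the augmentation of the bar resolution $B_{\bullet}(\OG(G/K,-),\OG,\Phi\bX)$ onto $\Phi\bX(G/K)=\bX^{K}$, which carries an extra degeneracy and is hence a deformation retraction, so the counit is a $G$-weak homotopy equivalence; while the unit $\uY\to\Phi\Psi\uY$ is a weak equivalence on basic diagrams (there $\Phi\Psi\uZ_{G/K}\simeq\Phi(G/K\times\bZ)=\uZ_{G/K}$, cf.\ \S\ref{sgdiagramstop} and \eqref{eqncell}), and a cell-by-cell induction along \eqref{eqattach} --- again using compatibility of the $(-)^{H}$ with those colimits --- extends this to all cellular $\uY$, hence to arbitrary $\uY$ by passing to a cellular approximation, since $\Phi$ detects weak equivalences.

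The main obstacle I anticipate is the recurring point-set question underlying all three claims: that the fixed-point functors $(-)^{H}$ commute with the colimits used to assemble (relative) $G$-CW complexes --- pushouts along closed $G$-cofibrations and sequential colimits along closed inclusions. This is classical but must be quoted with care; it is precisely where finiteness of $G$ and the explicit $G$-CW structure enter, and it is the reason one routes the construction through simplicial sets as in \S\ref{sgdiagramstop}.
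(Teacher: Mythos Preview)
The paper does not supply its own proof of this statement: it is recorded as a \emph{Fact} with a bare citation to \cite{ElmS}, and is used thereafter without further argument. Your proposal is a correct and rather detailed reconstruction of the standard Elmendorf argument (bar construction model for $\Psi$, extra-degeneracy contraction, plus the Dwyer--Kan Quillen equivalence of \S\ref{sgdiagramstop}), so there is nothing in the paper to compare it against beyond the reference itself.

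Two small remarks. First, you have the unit and counit oriented according to the usual convention for $\Psi\dashv\Phi$ (unit $\uX\to\Phi\Psi\uX$, counit $\Psi\Phi\bX\to\bX$), whereas the paper's wording of the Fact reverses both the names and the arrows; this is a quirk of the paper's phrasing, not an error in your argument, and the weak-equivalence conclusions are unaffected. Second, the point you flag as the ``main obstacle'' --- commutation of $(-)^{H}$ with the colimits building a $G$-CW complex --- is indeed exactly the ingredient Elmendorf uses and the reason the paper routes through simplicial sets; your acknowledgement of it is appropriate, and for the purposes of this paper a citation suffices.
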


\begin{assume}\label{agcw}
From now on we assume that all our $G$-CW-complexes are of the form
\w{\bX=\Psi\Phi\bX'} for some $G$-space \w[.]{\bX'}
\end{assume}

\begin{mysubsection}{Bredon cohomology}\label{sbredon}
Let $G$ be a (finite) group, and let \w{\uM:\OG\op\to\Abgp} be an
\ww{\OG\op}-diagram in abelian groups, known as a \emph{coefficient
  system} for $G$. Bredon showed that for each \w[,]{n\geq 1} one can
construct a natural \ww{\OG\op}-diagram \w{\uK(\uM,n):\OG\op\to\Top} with
each \w{\uK(\uM,n)(G/H):=\bK(\uM(G/H),n)} an Eilenberg-Mac~Lane space
(cf.\ \cite[\S 6]{BredE}). Equivalently, applying the functor $\Psi$ to
\w{\uK(\uM,n)} yields a $G$-space \w{\bK(\uM,n)} with the property that
\w{\bK(\uM,n)^{H}} is an (ordinary) Eilenberg-Mac~Lane space of type
\w[.]{\bK(\uM(G/H),n)} In particular, for \w{H=\{e\}} we see that
\w{\bK(\uM,n)} is an ordinary \w[.]{\bK(\uM(G/\{e\}),n)}

By \cite[Theorem 3.1]{DKanSR}, the function complex
\w{\map_{G}(\bX,\bY)} of $G$-maps between two $G$-CW
complexes $\bX$ and $\bY$ is weakly equivalent to the mapping space
\w{\map_{\TOG}(\uX,\uY)} between the corresponding
\ww{\OG\op}-diagrams (cf.\ \wref[),]{eqsingdiag} at least if $\uX$ is
cofibrant and $\uY$ is fibrant. Thus the study of $G$-maps between
$G$-spaces (up to homotopy) is reduced to the study of mapping spaces
of diagrams.

The $n$-th \emph{Bredon cohomology group} of a $G$-space $\bX$ with
coefficients in $\uM$ may then be defined to
be
\begin{myeq}[\label{eqbredon}]
\HB{n}{G}{\bX}{\uM}~:=~\pi_{0}\map_{\TOG}(\uX,\uK(\uM,n))~,
\end{myeq}
\noindent where we assume that $\uX$ is cofibrant and \w{\uK(\uM,n)} is
fibrant in the model category \w[.]{\TOG} See \cite[(6.1)]{BredE}.
Since $\Psi$ induces a simplicial Quillen equivalence, we obtain a
natural isomorphism
\begin{myeq}[\label{eqelmendorf}]
\HB{n-i}{G}{\bX}{\uM}~\cong~\pi_{i}\map_{G}(\bX,\bK(\uM,n))
\end{myeq}
\noindent for \w{0\leq i\leq n} (cf.\ \cite[V, \S 4]{MayEH}).

See \cite{BrockS} for a definition of singular equivariant (co)homology (for finite groups), \cite{WillsE} for cellular equivariant homology, and \cite{MatuEC} for cohomology, when $G$ is an arbitrary topological group.
\end{mysubsection}

%
%
\sect{ A filtration on the orbit category}
\label{cfoc}

The spectral sequence we construct here is based on the following filtration of \w[:]{\OG\op}

\begin{mysubsection}{Filtering $\OG\op$}\label{sfilter}
For any subgroup $H$ of $G$, we define the \emph{length} of $H$ in
$G$, denoted by \w[,]{\lenG H} to be the maximal \w{0\leq k<\infty}
such that there exists a sequence of proper inclusions of subgroups:
\begin{myeq}[\label{eqlength}]
H=H_{0}<H_{1}<H_{2}<\dotsc<H_{k-1}<H_{k}=G~.
\end{myeq}
\noindent This induces a filtration
\begin{myeq}[\label{eqfilter}]
\F_{0}~\subset~\F_{1}~\subset~\dotsc\F_{k}\subset~\dotsc~\subset~\OG\op
\end{myeq}
\noindent by full subcategories, where
\w{\Obj\F_{k}:=\{G/H\in\OG\op~:\ \lenG H\leq k\}} (so
\w[).]{\Obj\F_{0}=\{G/G\}}

Since $G$ is finite,  the filtration is exhaustive:
if \w{\lenG\{e\}=N} \wwh that is, the longest possible sequence
\wref{eqlength} in $G$ has $N$ inclusions of proper subgroups \wh then
\w[.]{\F_{N}=\OG\op}

Let \w{\EH{m}} denote the full subcategory of the slice category
\w{\OG\op/(G/H)} whose objects are \w{G/K\to G/H} in \w{\OG\op} with
\w[,]{G/K\in\F_{m}} with the obvious commuting triangles as maps.
\end{mysubsection}

\begin{defn}\label{dstrata}
Denote by \w{\cS_{k}} the $k$-th \emph{stratum} of the filtration \wh
that is, the full subcategory of \w{\OG\op} whose objects are in
\w[.]{\F_{k}\setminus\F_{k-1}} Note that all the maps
in \w{\cS_{k}} are isomorphisms (by the description in \S
\ref{socat}), and conversely, any isomorphism in \w{\OG\op} are
contained in some \w[.]{\cS_{k}} Any other map in \w{\OG\op} strictly
increases filtration.

We let \w{\hF_{k}} denote the collection of subgroups \w{H<G} such
that \w[,]{G/H\in\F_{k}} and \w[.]{\hS_{k}:=\hF_{k}\setminus\hF_{k-1}}
\end{defn}

\begin{mysubsection}{The tower of mapping spaces}\label{sstrata}
If $\C$ is any simplicially enriched category with colimits, the inclusion
\w{\F_{k}\hra\OG\op} induces a simplicial functor
\w{\tau_{k}:\C^{\OG\op}\to\C^{\F_{k}}} defined
\w[.]{\tau_{k}\uX:=\uX\rest{\F_{k}}}
Similarly, the inclusion \w{J_{k}:\F_{k}\hra\F_{k+1}} induces a
simplicial functor  \w[,]{J_{k}^{\ast}:\C^{\F_{k+1}}\to\C^{\F_{k}}}
so for any two diagrams \w{\uX,\uY:\OG\op\to\C} we obtain a tower of
simplicial sets
\begin{myeq}[\label{eqtower}]
\map(\tau_{N}\uX,\,\tau_{N}\uY)~\xra{p_{N}}~\dotsc~
\map(\tau_{k+1}\uX,\,\tau_{k+1}\uY)~\xra{p_{k+1}}~
\map(\tau_{k}\uX,\,\tau_{k}\uY)~\to~\dotsc~
\end{myeq}
\noindent (mapping spaces in \w[),]{\C^{\F_{k}}}
with \w{p_{k}} induced by \w[.]{J_{k}^{\ast}} If we assume
$\uY$ is pointed, \wref{eqtower} becomes a tower of pointed simplicial
sets.

Note that \w{J_{k}^{\ast}:\C^{\F_{k+1}}\to\C^{\F_{k}}} has a left
adjoint \w[,]{\xi_{k}:\C^{\F_{k}}\to\C^{\F_{k+1}}} with
\begin{myeq}[\label{eqadjoint}]
(\xi_{k}\uZ)(G/H)~:=~
\raisebox{-0.5ex}{$\substack{{\mbox{{$\colim$}}}\\
{{(\psi\op:G/K\to G/H)}~\text{in}~\EH{k}}}$}~
\uZ(G/K)_{\psi}
\end{myeq}
\noindent at \w{G/H\in\F_{k+1}} for any \w[.]{\uZ:\F_{k}\to\C} In
particular, if \w[,]{G/H\in \F_{k}} the indexing slice category
\w{\EH{k}} has a terminal object \w[,]{\Id:G/H\to G/H} so
\w[.]{(\xi_{k}\uZ)(G/H)=\uZ(G/H)} The counit of the adjunction will be
denoted by \w{\eta_{k}:\xi_{k}J_{k}^{\ast}\uW\to\uW} for
\w[.]{\uW:\F_{k+1}\to\C}
\end{mysubsection}

\begin{prop}\label{pcofib}
When \w{\uX\in\TOG} is a cellular diagram, then for each \w[,]{0<k\leq N}
\w{\tau_{k}\uX} is cofibrant in \w[,]{\TF{k}} and the counit
\w{\eta_{k-1}:\xi_{k-1}\tau_{k-1}\uX\to\tau_{k}\uX} is a
cofibration in \w[.]{\TF{k}}
\end{prop}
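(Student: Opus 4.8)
The plan is to induct on the cellular structure of $\uX$, reducing everything to the behavior of the functors $\tau_{k}$, $\xi_{k-1}$ on cells $\uD^{n}_{G/K}$ and spheres $\uS^{n}_{G/K}$. First I would observe that both $\tau_{k}$ (restriction along $\F_{k}\hra\OG\op$) and $\xi_{k-1}$ (the left adjoint of Proposition \ref{sstrata}'s $J_{k-1}^{\ast}$) are left adjoints, hence preserve colimits; in particular they commute with the pushouts and transfinite compositions used to build a cellular diagram in \wref{eqattach}. So it suffices to treat a single cell attachment: if the claim holds for $\uX^{p-1}$, I want it to persist after forming $\uX^{p}=\uX^{p-1}\cup_{(f_{\alpha})}\coprod_{\alpha}\uD^{n_{\alpha}}_{G/K_{\alpha}}$. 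Because pushouts of cofibrations are cofibrations and $\xi_{k-1}$ preserves pushouts, the inductive step for ``$\tau_{k}\uX$ cofibrant'' comes down to checking that $\tau_{k}$ sends a generating cofibration $\uS^{n-1}_{G/K}\hra\uD^{n}_{G/K}$ of $\TOG$ to a cofibration of $\TF{k}$, and the inductive step for ``$\eta_{k-1}$ is a cofibration'' comes down to a cube/pushout-product style argument comparing the two pushout squares (the one defining $\uX^{p}$ and the one defining $\tau_{k}\uX^{p}$) through $\xi_{k-1}\tau_{k-1}$ versus $\tau_{k}$.

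The key computation is therefore local, about the cell $\uD^{n}_{G/K}=\Phi(\bD{n}\times G/K)$, whose value is given by \wref{eqncell}: $\uD^{n}_{G/K}(G/H)=\coprod_{\psi\op:G/K\to G/H}\bD{n}_{\psi}$. Restricting to $\F_{k}$ just discards the summands indexed by objects $G/H\notin\F_{k}$, so $\tau_{k}\uD^{n}_{G/K}$ is again a ``cell-like'' free diagram: if $G/K\in\F_{k}$ it is the $\F_{k}$-cell $\uD^{n}_{G/K}$ built from $\bD{n}\times\EuScript{F}_{k}(G/K,-)$-type data, and if $G/K\notin\F_{k}$ the restriction of every summand still makes sense but the diagram is ``generated off $\F_{k}$''. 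In either case $\tau_{k}(\uS^{n-1}_{G/K}\hra\uD^{n}_{G/K})$ is a coproduct (over the relevant morphism set) of copies of $\bS{n-1}\hra\bD{n}$ smashed with representables, hence a relative cellular inclusion in $\TF{k}$, giving cofibrancy of $\tau_{k}\uX$.

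For the counit $\eta_{k-1}:\xi_{k-1}\tau_{k-1}\uX\to\tau_{k}\uX$, the point is to understand it on a single cell $\uD^{n}_{G/K}$ with $G/K$ of length exactly $k-1$ or exactly $k$ (length $<k-1$ gives an isomorphism by the terminal-object remark after \wref{eqadjoint}). Unwinding \wref{eqadjoint} at $G/H\in\F_{k}$, the map $\eta_{k-1}$ is the inclusion of those summands $\bD{n}_{\psi}$ with $\psi$ factoring through some $G/K'\in\F_{k-1}$, into the full coproduct over all $\psi:G/K\to G/H$; thus $\eta_{k-1}$ on a cell is literally an inclusion of a sub-coproduct of copies of $\bD{n}$, which is a cofibration of spaces levelwise and, assembled, a relative cellular inclusion of diagrams in $\TF{k}$. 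Then a pushout-product argument — using that $\xi_{k-1}$ preserves the pushout \wref{eqattach}, that $\eta_{k-1}$ is natural, and that the cofibrations already established compose and push out — upgrades the single-cell statement to all of $\tau_{k}\uX$.

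The main obstacle I expect is the bookkeeping in the pushout-product/cube step: one must carefully compare the square
\[
\begin{CD}
\xi_{k-1}\tau_{k-1}\uX^{p-1} @>>> \xi_{k-1}\tau_{k-1}\uX^{p}\\
@VVV @VVV\\
\tau_{k}\uX^{p-1} @>>> \tau_{k}\uX^{p}
\end{CD}
\]
and show the induced map from the pushout of the top-left span into $\tau_{k}\uX^{p}$ is a cofibration, which requires knowing not just that $\eta_{k-1}$ is a cofibration on cells but that the relevant ``latching-type'' map $\xi_{k-1}\tau_{k-1}\uD^{n}_{G/K}\cup_{\xi_{k-1}\tau_{k-1}\uS^{n-1}_{G/K}}\tau_{k}\uS^{n-1}_{G/K}\to\tau_{k}\uD^{n}_{G/K}$ is a cofibration in $\TF{k}$. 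That last map is again, after unwinding \wref{eqncell} and \wref{eqadjoint}, an inclusion of a sub-coproduct of copies of $\bD{n}$ (those $\psi$ that either hit the sphere or factor through $\F_{k-1}$) into the full coproduct, hence a cofibration — but verifying that the indexing sets match up correctly across the four corners is the delicate part, and is where I would spend the most care.
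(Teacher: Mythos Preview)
Your approach is correct but takes a different route from the paper's. You work cell-by-cell, using that $\tau_{k}$ and $\xi_{k-1}$ are left adjoints (hence preserve colimits), and then run a pushout-product/cube argument to propagate the single-cell analysis through the skeletal filtration \wref{eqattach}. The paper instead makes one global identification that eliminates the bookkeeping you flag as the ``main obstacle'': it introduces the sub-cellular diagram \w{\Q{j}\uX} consisting of exactly those cells \w{\uD^{n}_{G/K}} with \w[,]{G/K\in\F_{j}} observes that \w[,]{\tau_{k}\uX=\tau_{k}\Q{k}\uX} and then proves directly (via Fact \ref{funique} and induction on skeleta) that \w{\xi_{k-1}\tau_{k-1}\uX} \emph{represents the same functor as} \w{\tau_{k}\Q{k-1}\uX} in \w[.]{\TF{k}} Once you know \w[,]{\xi_{k-1}\tau_{k-1}\uX\cong\tau_{k}\Q{k-1}\uX} the counit \w{\eta_{k-1}} is literally the cellular inclusion \w[,]{\tau_{k}\Q{k-1}\uX\hra\tau_{k}\Q{k}\uX} and there is nothing left to check.

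What each buys: your argument is the standard model-categorical template and would transfer verbatim to other Reedy-type situations; it also makes transparent \emph{why} the counit is a cofibration on each cell (your observation that for \w{G/K\in\cS_{k}} one gets \w{\emptyset\hra\tau_{k}\uD^{n}_{G/K}} and for \w{G/K\in\F_{k-1}} an isomorphism is exactly right). The paper's argument is shorter and more conceptual here, because it trades the latching-map computation for a Yoneda-style uniqueness statement (any \w{\uh:\tau_{k-1}\uX\to\tau_{k-1}\uY} extends uniquely to \w[),]{\Q{k-1}\uX\to\uY} which is immediate from Fact \ref{funique} once one notices that every cell of \w{\Q{k-1}\uX} has its indexing object in \w[.]{\F_{k-1}} In particular, the ``delicate part'' you identify---matching up indexing sets across the four corners of the cube---simply does not arise in the paper's proof.
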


\begin{proof}
We can filter $\uX$ by sub-cellular diagrams in \w[:]{\TOG}
$$
\Q{0}\uX~\hra~\Q{1}\uX~\hra~\dotsc~\Q{k}\uX~\hra\dotsc \Q{N}\uX=\uX~,
$$
\noindent where \w{\Q{k}\uX} consists of all cells \w{\uD^{n}_{G/H}}
with \w{G/H\in\F_{k}} (see \wref[).]{eqncell} Then
\w{\tau_{k}\uX=\Q{k}\tau_{k}\uX=\tau_{k}\Q{k}\uX} is a cellular diagram in
\w[,]{\TF{k}} so it is cofibrant, for all \w[.]{k\geq 0}

To prove the last statement, we shall show that for cellular $\uX$,
\w{\xi_{k-1}\tau_{k-1}\uX} is isomorphic to \w[,]{\Q{k-1}\uX} and the
counit \w{\eta_{k-1}} is just the cellular inclusion
\w[.]{\Q{k-1}\uX\hra\Q{k}\uX} For this, by definition of the adjunction
counit it suffices to show that the restriction map
%
$$
\Hom_{\TF{k}}(\Q{k-1}\uX,\uY)~\xra{\rho}~
\Hom_{\TF{k-1}}(\tau_{k-1}\uX,\,\tau_{k-1}\uY)~.
$$
%
\noindent is a natural isomorphism for any \w[:]{\uY\in\TF{k}} in
other words, that any map of \ww{\F_{k-1}}-diagrams
\w{\uh:\tau_{k-1}\uX\to\tau_{k-1}\uY} extends uniquely to a map of
\ww{\F_{k}}-diagrams \w[.]{\uth{}:\Q{k-1}\uX\to\uY}
We show this by induction on the cellular skeleta
\w[:]{(\uX^{p})_{p=0}^{\infty}}

For \w[,]{p=0} we have
$$
\uX^{0}=\coprod_{\alpha\in I_{0}}\,\uD^{n_{\alpha}}_{G/H_{\alpha}}\hsp\text{and}\hsp
\Q{k-1}\uX^{0}=
\coprod_{\alpha\in I_{0},~G/H_{\alpha}\in\F_{k-1}}~\uD^{n_{\alpha}}_{G/H_{\alpha}}~.
$$
Therefore, for \w[,]{G/H_{\alpha}\in\F_{k-1}}
$$
\uh(G/H^{\alpha}):\tau_{k-1}\uX(G/H^{\alpha})~\lora~\uY(G/H^{\alpha})
$$
\noindent is well-defined, and determines
\w[,]{\uth{0}\rest{\uD^{n_{\alpha}}_{G/H_{\alpha}}}} and thus
\w{\uth{0}:\Q{k-1}\uX^{p}\to\uY} on the coproduct, by Fact \ref{funique}.

For the induction step, assume we have the following
solid diagram, and wish must define the unique \w{\uth{p}:\Q{k-1}\uX^{p}\to\uY}
making the full diagram commute:
\mydiagram[\label{eqinduct}]{
\Q{k-1}\uX^{p-1} \ar@{^{(}->}[rr] \ar[d]^{\rst} \ar@/^{2.5pc}/[rrrr]^{\uth{p-1}}&&
\Q{k-1}\uX^{p} \ar[d]^{\rst} \ar@{.>}[rr]^{\uth{p}}  &&
\uY \ar[d]^{\rst}\\
\tau_{k-1}\uX^{p-1} \ar@{^{(}->}[rr] \ar@/_{2.5pc}/[rrrr]_{\uh^{p-1}}&&
\tau_{k-1}\uX^{p}\ar[rr]_{\uh^{p}} && \tau_{k-1}\uY
}
\noindent where \w[.]{\uh^{i}:=\uh\rest{\tau_{k-1}\uX^{i}}}
Note that the diagram maps here have different indexing categories.

By definition of the cellular skeleta, the commuting square in the
following diagram (in \w[)]{\TF{k}} is cocartesian, so to define the
dotted map \w[,]{\uth{p}:\Q{k-1}\uX^{p}\to\uY} we need only produce a map
\w{g:\coprod_{\alpha\in I_{p},~G/H_{\alpha}\in\F_{k-1}}
\uD^{n_{\alpha}}_{G/H_{\alpha}}\to\uY} making the solid diagram commute:
%
$$
\xymatrix{
\ar @{} [drr] |<<<<<<<<<<<<<<<<{\framebox{\scriptsize{PO}}}
\coprod_{\alpha\in I_{p},~G/H_{\alpha}\in\F_{k-1}}~
\uS^{n_{\alpha}}_{G/H_{\alpha}} \ar@{^{(}->}[d]
\ar[rr]^{\coprod_{\alpha} f_{\alpha}} && \Q{k-1}\uX^{p-1}\ar[d]
\ar@/^{2.5pc}/[rrdd]^{\uth{p-1}} \\
\coprod_{\alpha\in I_{p},~G/H_{\alpha}\in\F_{k-1}}~
\uD^{n_{\alpha}}_{G/H_{\alpha}} \ar[rr] \ar@/_{2.5pc}/[rrrrd]_{g}
&& \Q{k-1}\uX^{p} \ar@{.>}[rrd]^{\uth{p}}  && \\
&&&& \uY
}
$$
\noindent Since all the disc-summands in the lower left corner are
indexed by objects \w[,]{G/H_{\alpha}\in\F_{k-1}} as above
\w{\uh(G/H^{\alpha})} determines $g$ by Fact \ref{funique}, which also
ensures that $g$ and \w{\uth{p-1}} agree on the sphere-summands, using
commutativity of \wref[.]{eqinduct}
\end{proof}

\begin{corollary}\label{ccofb}
If $\bX$ is a $G$-CW complex, and \w[,]{\uX=\Phi\bX} each \w{\tau_{k}\uX}
is cofibrant and  \w{\eta_{k-1}:\xi_{k-1}\tau_{k-1}\uX\to\tau_{k}\uX}
is a cofibration.
\end{corollary}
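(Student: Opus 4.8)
The plan is to reduce the statement directly to Proposition \ref{pcofib}, whose only hypothesis is that $\uX$ be a cellular diagram in $\TOG$; so the one thing to check is that $\uX=\Phi\bX$ is genuinely such a diagram, and not merely weakly equivalent to one. By Assumption \ref{agcw} we may write the $G$-CW complex $\bX$ as $\bX=\Psi\Phi\bX'$ for some $G$-space $\bX'$. Put $\uX':=\Phi\bX'\in\TOG$; then $\bX=\Psi\uX'$, and hence $\uX=\Phi\bX=\Phi\Psi\uX'$. By Fact \ref{fgcw}, $\Phi\Psi\uX'$ is a cellular diagram in $\TOG$, so $\uX$ itself is cellular.

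Granting this, the corollary follows at once: applying Proposition \ref{pcofib} to the cellular diagram $\uX=\Phi\bX$ gives that each $\tau_{k}\uX$ is cofibrant in $\TF{k}$ and that each counit $\eta_{k-1}:\xi_{k-1}\tau_{k-1}\uX\to\tau_{k}\uX$ is a cofibration in $\TF{k}$, which is precisely the assertion. (For $k=0$ there is nothing to prove, since the only object of $\F_{0}$ is $G/G$ and $\tau_{0}\uX$ is then just the CW complex $\bX^{G}$.)

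I do not expect any real obstacle here: the substance is entirely contained in Proposition \ref{pcofib} and Fact \ref{fgcw}, and the corollary is a bookkeeping consequence of them. The one point worth stating explicitly is that this reduction genuinely uses Assumption \ref{agcw} --- since $\Phi$ is a right adjoint, it need not transport an arbitrary $G$-CW structure on $\bX$ into a cellular structure on $\Phi\bX$, which is exactly why one routes through $\Psi\Phi\bX'$ in order to invoke Fact \ref{fgcw}.
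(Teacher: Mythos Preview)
Your proposal is correct and follows exactly the paper's own route: the paper's proof is the single line ``Use Assumption \ref{agcw} and Fact \ref{fgcw},'' and you have simply spelled out what that means, namely that $\uX=\Phi\Psi(\Phi\bX')$ is cellular by Fact \ref{fgcw}, so Proposition \ref{pcofib} applies.
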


\begin{proof}
Use Assumption \ref{agcw} and Fact \ref{fgcw}.
\end{proof}

\begin{defn}\label{dfiber}
For $\uX$ as above, let \w{\uC_{k}} denote the (homotopy) cofiber of
\w{\eta_{k-1}:\xi_{k-1}\tau_{k-1}\uX\to\tau_{k}\uX} in
\w[,]{\TF{k}} with \w{s_{k}:\tau_{k}\uX\to\uC_{k}} the
structure map in the cofibration sequence. For any fibrant
\w[,]{\uY\in\TOG} define:
%
$$
F_{k}(\uX,\uY)~:=~\map_{\TF{k}}(\uC_{k},\,\tau_{k}\uY)~.
$$
%
\end{defn}

\begin{corollary}\label{ccofib}
For $X$ as above and any fibrant pointed \w[,]{\uY\in\C^{\OG\op}}
we have a fibration sequence of fibrant simplicial sets:
\begin{myeq}[\label{eqfibseq}]
F_{k}(\uX,\uY)~\xra{s^{\ast}_{k}}~\map(\tau_{k}\uX,\,\tau_{k}\uY)~
\xra{p_{k}}~\map(\tau_{k-1}\uX,\,\tau_{k-1}\uY)
\end{myeq}
\end{corollary}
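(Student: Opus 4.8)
The plan is to derive the fibration sequence \wref{eqfibseq} by combining the cofibration statement of Corollary \ref{ccofb} with the standard behaviour of simplicial mapping spaces in a simplicial model category. First I would observe that the tower map \w{p_{k}:\map(\tau_{k}\uX,\tau_{k}\uY)\to\map(\tau_{k-1}\uX,\tau_{k-1}\uY)} in \wref{eqtower} factors through the adjunction \w{\xi_{k-1}\dashv J_{k-1}^{\ast}} of \S\ref{sstrata}: precomposition with the counit \w{\eta_{k-1}:\xi_{k-1}\tau_{k-1}\uX\to\tau_{k}\uX} gives a map \w{\map_{\TF{k}}(\tau_{k}\uX,\tau_{k}\uY)\to\map_{\TF{k}}(\xi_{k-1}\tau_{k-1}\uX,\tau_{k}\uY)}, and the latter is naturally isomorphic to \w{\map_{\TF{k-1}}(\tau_{k-1}\uX,J_{k-1}^{\ast}\tau_{k}\uY)=\map(\tau_{k-1}\uX,\tau_{k-1}\uY)} since \w{J_{k-1}^{\ast}\tau_{k}=\tau_{k-1}}. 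This identifies \w{p_{k}} with \w{\eta_{k-1}^{\ast}}, which is the key reduction.

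Next I would invoke the cofibration sequence \w{\xi_{k-1}\tau_{k-1}\uX\xra{\eta_{k-1}}\tau_{k}\uX\xra{s_{k}}\uC_{k}} from Definition \ref{dfiber}. By Corollary \ref{ccofb}, \w{\eta_{k-1}} is a cofibration of cofibrant objects in \w{\TF{k}}, so this is a genuine cofibration sequence, and \w{\uC_{k}} is cofibrant. Applying \w{\map_{\TF{k}}(-,\tau_{k}\uY)} to it, with \w{\tau_{k}\uY} fibrant (which holds because fibrancy is detected objectwise and is preserved by restriction of the indexing category), yields by the axioms of a simplicial model category a fibration sequence of fibrant simplicial sets
$$
\map_{\TF{k}}(\uC_{k},\tau_{k}\uY)~\xra{s_{k}^{\ast}}~\map_{\TF{k}}(\tau_{k}\uX,\tau_{k}\uY)~\xra{\eta_{k-1}^{\ast}}~\map_{\TF{k}}(\xi_{k-1}\tau_{k-1}\uX,\tau_{k}\uY)~.
$$
The left-hand term is \w{F_{k}(\uX,\uY)} by definition, and under the identification from the first step the right-hand term is \w{\map(\tau_{k-1}\uX,\tau_{k-1}\uY)} with the second map becoming \w{p_{k}}; this is exactly \wref{eqfibseq}. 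One subtlety to check is that the pushout-product / pullback-mapping-space axiom gives a \emph{fibration} onto its image and that, because \w{\uC_{k}} is the cofiber of \w{\eta_{k-1}} rather than an arbitrary map, the sequence is moreover a fibration \emph{sequence} in the pointed sense — i.e. \w{F_{k}(\uX,\uY)} is the fibre of \w{p_{k}} over the basepoint; here the pointedness of \w{\uY} (hence of \w{\tau_{k}\uY}) supplies the basepoints, and the cofibre sequence structure guarantees the fibre is \w{\map_{\TF{k}}(\uC_{k},\tau_{k}\uY)} rather than merely a homotopy fibre.

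The main obstacle I anticipate is not any single hard argument but rather the careful bookkeeping of the three different indexing categories \w{\F_{k-1}}, \w{\F_{k}}, \w{\OG\op} and making sure the adjunction isomorphism \w{\map_{\TF{k}}(\xi_{k-1}\uZ,\uW)\cong\map_{\TF{k-1}}(\uZ,J_{k-1}^{\ast}\uW)} is compatible with the simplicial enrichment (it is, since \w{\xi_{k-1}} commutes with the tensoring \w{-\otimes\Delta[n]}, being a left adjoint, so the isomorphism holds levelwise in the simplicial direction). Once that compatibility is in hand, the conclusion is a formal consequence of Corollary \ref{ccofb} and the simplicial model category axioms on \w{\TF{k}}, applied exactly as in \cite[II, \S 2]{QuiH}.
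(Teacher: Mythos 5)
Your proposal is correct and follows exactly the route the paper intends. The paper offers no explicit proof of Corollary \ref{ccofib}, treating it as an immediate consequence of Corollary \ref{ccofb}, Definition \ref{dfiber}, and the simplicial model category axioms; you have simply spelled out that implicit argument, correctly identifying the essential reduction (that under the adjunction isomorphism \w{\map_{\TF{k}}(\xi_{k-1}\tau_{k-1}\uX,\tau_{k}\uY)\cong\map_{\TF{k-1}}(\tau_{k-1}\uX,\tau_{k-1}\uY)} the restriction map \w{p_{k}} becomes \w[),]{\eta_{k-1}^{\ast}} and then invoking SM7 on the cofibration \w{\eta_{k-1}} with fibrant pointed target to get a fibration whose strict fibre over the basepoint is \w{\map_{\TF{k}}(\uC_{k},\tau_{k}\uY)=F_{k}(\uX,\uY)} because \w{\uC_{k}} is the actual quotient.
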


\begin{remark}\label{rtower}
Under the assumptions of the Corollary, \wref{eqtower} is a tower of
fibrations (of Kan complexes), whose (homotopy) limit is the function
complex \w{M=\map_{\TOG}(\uX,\uY)} we are interested in.
Its homotopy spectral sequence thus converges to the homotopy groups
of $M$. In order to make use of it, we need to identify the homotopy
groups of the successive fibers \w[.]{F_{k}}
\end{remark}

\begin{defn}\label{dcofiber}
If $\bX$ is a $G$-CW complex and \w{H\leq G} is any subgroup let
\w{\bX_{H}:=\bigcup_{H<K}\,\bX^{K}} denote the union of the fixed point
sets under all larger subgroups, which is a sub-$\WH$-complex of \w[.]{\bX^{H}}
The quotient \ww{\WH}-space \w{\XHH:=\bX^{H}/\bX_{H}} will be called the
\emph{modified} $H$-fixed point set of $\bX$, with
\w{x^{H}_{0}:=[\bX_{H}]}  as its base point, and \w{s_{H}:\bX^{H}\to\XHH}
the \ww{\WH}-equivariant quotient map.
\end{defn}

\begin{fact}\label{ffree}
The automorphism group \w{\WH} fixes \w[,]{x^{H}_{0}} and acts freely
elsewhere in \w[.]{\XHH}
\end{fact}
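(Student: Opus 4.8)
The plan is to unwind the definitions and use the concrete description of morphisms in $\OG$ from \S\ref{socat}. First I would recall that $\WH = \Aut_{\OG}(G/H) = N_{G}H/H$ acts on $\bX^{H}$ in the standard way: an element $\bar{a} = aH \in N_{G}H/H$ sends $x \in \bX^{H}$ to $a^{-1}x$ (or $ax$, depending on the chosen convention — I would fix the one compatible with \wref{eqrightmult}). The key observation is that this action restricts to $\bX_{H} = \bigcup_{H<K}\bX^{K}$: indeed, if $x \in \bX^{K}$ for some $K > H$, then $a^{-1}x \in \bX^{a^{-1}Ka}$, and since $a \in N_{G}H$ we have $a^{-1}Ka \supseteq a^{-1}Ha = H$, with the inclusion still proper; hence $a^{-1}x \in \bX_{H}$ as well. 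Therefore the $\WH$-action descends to the quotient $\XHH = \bX^{H}/\bX_{H}$, and it fixes the basepoint $x^{H}_{0} = [\bX_{H}]$ by construction, since $\bX_{H}$ is a $\WH$-invariant subspace collapsed to a point.

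Next I would establish freeness away from the basepoint. Let $x \in \bX^{H} \setminus \bX_{H}$ represent a point of $\XHH$ other than $x^{H}_{0}$, and suppose $\bar{a} \in \WH$ fixes $[x]$ in $\XHH$. Since $x \notin \bX_{H}$ and $\bX_{H}$ is collapsed, the preimage of $[x]$ under $s_{H}$ is the single point $x$, so $\bar{a}$ fixing $[x]$ forces $a^{-1}x = x$, i.e. $x$ is fixed by the actual element $a \in N_{G}H$. But then the isotropy subgroup of $x$ in $G$ contains both $H$ and $a$; call it $G_{x} \leq G$. I claim $G_{x} = H$: otherwise $G_{x}$ is a subgroup strictly larger than $H$ (it contains $H$, and if it equals $H$ we are done), which would put $x \in \bX^{G_{x}} \subseteq \bX_{H}$, contradicting $x \notin \bX_{H}$. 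Hence $a \in G_{x} = H$, so $\bar{a} = aH$ is the identity element of $\WH = N_{G}H/H$. This shows the stabilizer of $[x]$ is trivial, i.e. $\WH$ acts freely on $\XHH \setminus \{x^{H}_{0}\}$.

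I would phrase both halves carefully to handle the degenerate edge case: if $H$ is maximal among subgroups (so $\bX_{H} = \emptyset$, or rather the union is empty and $x^{H}_{0}$ is a disjoint added basepoint $+$), the argument that $G_{x} = H$ still goes through verbatim, since any strictly larger isotropy group would again land $x$ in the (now-empty) union, which is impossible. The only mild subtlety — and the step I expect to require the most care rather than being routine — is getting the action conventions consistent: one must check that the $\WH$-action on $\XHH$ used here is literally the one induced from the $N_{G}H$-action on $\bX^{H}$ via $\bX = \Psi\Phi\bX'$ (Assumption \ref{agcw}), and that conjugation $a^{-1}Ka$ genuinely preserves strict containment when $a \in N_{G}H$. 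Both are immediate, but worth stating explicitly so the reader sees why normality of $H$ in $N_{G}H$ (as opposed to in $G$) is exactly what makes the argument work.
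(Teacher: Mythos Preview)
Your argument is correct: the key point is exactly that any $x\in\bX^{H}\setminus\bX_{H}$ has isotropy group $G_{x}=H$ (otherwise $x\in\bX^{G_{x}}\subseteq\bX_{H}$), so $a\in N_{G}H$ fixing $x$ forces $a\in H$ and $\bar{a}=e$ in $\WH$. The paper states Fact~\ref{ffree} without proof, treating it as immediate from the definitions; your write-up supplies precisely the routine verification the authors omitted, and there is nothing to compare.
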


\begin{defn}\label{dsubweyl}
If \w[,]{H<K\leq G} let:
$$
\WKH:=(N_{G}K\cap N_{G}H)/H\leq\WH\hsp \text{and}\hsp
\bWHK:=(N_{G}K\cap N_{G}H)/(K\cap N_{G}H)\leq\WK~,
$$
\noindent with a surjective homomorphism \w[.]{\pi:\WKH\to\bWHK}
\end{defn}

\begin{defn}\label{dposet}
Let \w{\Lambda^{H}_{m}} denote the opposite category of the
partially-ordered set of subgroups $K$ of $G$ with
\w[.]{H\leq K\in\hF_{m}} It embeds in \w{\OG\op/(G/H)} by
$$
K~\longmapsto~(i^{\ast}:G/K\to G/H)~.
$$
\end{defn}

Recall that a \emph{skeleton} of a category $\C$ is any full
subcategory \w{\sk\C} whose objects consist of one representative for each
isomorphism type (cf.\ \cite[IV, \S 4]{MacLC}). Any two skeleta of
$\C$ are isomorphic (and equivalent to $\C$).

\begin{lemma}\label{lskeleton}
For every \w[,]{H\leq G} \w{\Lambda^{H}_{m}} is a skeleton of
\w{\EH{m}} (cf.\ \S \ref{sfilter}).
\end{lemma}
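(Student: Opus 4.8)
The plan is to show that the inclusion $\Lambda^{H}_{m}\hookrightarrow\EH{m}$ is an equivalence of categories that hits every isomorphism class exactly once. Recall from \S\ref{socat} that any morphism $G/K\to G/H$ in $\OG\op$ (equivalently, any morphism $G/H\to G/K$ in $\OG$) factors as an isomorphism followed by the map $i^{\ast}$ induced by a subgroup inclusion, and that two such maps $\cj{K^{a^{-1}}}{a}\circ i_{\ast}$ and $\cj{K^{b^{-1}}}{b}\circ j_{\ast}$ agree iff $a^{-1}b\in K$ (Fact \ref{fcommute}). So an object of $\EH{m}$ is a map $G/K\to G/H$ with $\lenG K\leq m$, and I claim each such object is isomorphic, in the slice category, to one of the form $(i^{\ast}:G/K'\to G/H)$ with $H\leq K'\in\hF_{m}$. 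Indeed, given $\psi\op:G/K\to G/H$, decompose it (working in $\OG$) through a conjugate: $\psi$ factors as $G/H\xepic{} G/K^{a^{-1}}\xra{\cong}G/K$, so $K^{a^{-1}}\supseteq H$ for a suitable $a$, and $(\psi\op)$ is isomorphic in $\OG\op/(G/H)$ to $(i^{\ast}:G/K^{a^{-1}}\to G/H)$ via the isomorphism $\cj{K^{a^{-1}}}{a}$. Since conjugation preserves the length function, $\lenG(K^{a^{-1}})=\lenG K\leq m$, so $K^{a^{-1}}\in\hF_{m}$, and it contains $H$, hence lies in the index set defining $\Lambda^{H}_{m}$.

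Next I would verify that $\Lambda^{H}_{m}$ is \emph{full} in $\EH{m}$: a morphism in the slice category from $(i^{\ast}:G/K\to G/H)$ to $(i^{\ast}:G/L\to G/H)$ with $H\leq K,L$ is a map $\theta:G/K\to G/L$ in $\OG\op$ over $G/H$, and I must check the set of such $\theta$ is exactly the (empty-or-singleton) set $\Hom_{\Lambda^{H}_{m}}(K,L)$, i.e. it is nonempty iff $K\supseteq L$ (recall $\Lambda^{H}_{m}$ is the \emph{opposite} poset category, so a morphism $K\to L$ exists iff $L\leq K$) and then is unique. This is again bookkeeping with Fact \ref{fcommute}: writing $\theta=\cj{L^{a^{-1}}}{a}\circ j_{\ast}$, commutativity over $G/H$ forces, after composing, a relation that pins down $a$ modulo $L$ and forces $L^{a^{-1}}\supseteq$ (the image of $K$), which combined with $H\le L$, $H\le K$ and the fact that the composite back to $G/H$ is the given $i^{\ast}$ yields $a\in N_{G}H$ acting trivially enough that one may take $a\in L$; one then reads off that $\theta$ exists iff $L\leq K$ and is then the single map $j_{\ast}$. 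Finally, distinctness of isomorphism classes: if $(i^{\ast}:G/K\to G/H)\cong(i^{\ast}:G/L\to G/H)$ in the slice category then the underlying isomorphism $G/K\xra{\cong}G/L$ in $\OG$ forces $K$ and $L$ to be conjugate, say $L=K^{a^{-1}}$, and compatibility over $G/H$ (again Fact \ref{fcommute}, using $H\leq K$ and $H\leq L$) forces $a\in N_{G}H$ with $a^{-1}$ conjugating $K$ to $L$ while fixing the coset structure over $G/H$, which one checks implies $K=L$. Hence distinct objects of $\Lambda^{H}_{m}$ are non-isomorphic in $\EH{m}$.

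Putting these together: $\Lambda^{H}_{m}$ is a full subcategory of $\EH{m}$, every object of $\EH{m}$ is isomorphic to an object of $\Lambda^{H}_{m}$, and no two distinct objects of $\Lambda^{H}_{m}$ are isomorphic, which is exactly the definition of a skeleton. I expect the main obstacle to be the second step — the fullness computation — since it requires carefully tracking how the conjugation isomorphisms $\cj{K^{a^{-1}}}{a}$ of \eqref{eqrightmult} interact with the structure maps of the slice category over $G/H$, and in particular checking that the constraint "the composite to $G/H$ equals the fixed map $i^{\ast}$" is strong enough to kill the extra conjugation freedom that Fact \ref{fcommute} otherwise allows; once that normalization is in hand, the surjectivity-on-objects and distinctness steps are comparatively routine.
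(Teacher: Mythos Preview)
Your approach is essentially the paper's: both rest on the factorization of \S\ref{socat} and Fact~\ref{fcommute} to show that every object of $\EH{m}$ is isomorphic to a unique $i^{\ast}:G/K'\to G/H$ with $H\leq K'$, and the paper's short proof records just that (leaving fullness implicit). Your more explicit check of the three skeleton conditions is sound in outline, but the distinctness step contains an error. The slice compatibility does \emph{not} force $a\in N_{G}H$. Concretely, an isomorphism from $(i^{\ast}:G/K\to G/H)$ to $(i^{\ast}:G/L\to G/H)$ in $\OG\op/(G/H)$ corresponds in $\OG$ to a map $\theta:G/L\to G/K$, $gL\mapsto gaK$, with $L=aKa^{-1}$; the condition $\theta\circ i_{L}=i_{K}$ reads $gaK=gK$ for all $g$, i.e.\ $a\in K$, whence $L=aKa^{-1}=K$ immediately. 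Your claimed constraint $a\in N_{G}H$ is neither what the computation yields nor sufficient on its own: for $H=\{e\}$ one has $N_{G}H=G$, so that constraint would not separate two conjugate but distinct subgroups $K,L$. A cleaner route, once you have done the fullness calculation, is to note that distinctness is then free: mutual morphisms in the slice give $L\subseteq K$ and $K\subseteq L$.
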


\begin{proof}
By Fact \ref{fcommute}, any \w{\psi\op:G/K\to G/H} in
\w{\EH{m}} can be factored uniquely as an isomorphism
\w[,]{\phi:G/K'\to G/K} followed by of \w{i^{\ast}:G/K'\to G/H}
(induced by the inclusion  \w[).]{i:H\hra K'} Thus \w{\psi\op:G/K\to G/H}
is isomorphic in \w{\OG\op/(G/H)} to a unique object
\w{i^{\ast}:G/K'\to G/H} by the unique (vertical) isomorphism:
\mydiagram[\label{eqisomop}]{
G/K' \ar[rr]^{i^{\ast}} && G/H\\
G/K \ar[u]^{\cong}_{\phi\op}  \ar[rru]_{\psi\op=i^{\ast}\circ\phi\op} &&
}
\end{proof}

\begin{mysubsection}{Groups acting on categories}\label{sgac}
Note that the group \w{\WH} acts on the category
\w{\OG\op/(G/H)} and its subcategories \w{\EH{m}} via functors
\w{\Theta_{\bar{a}}:\EH{m}\to\EH{m}} (one for each \w[),]{\bar{a}\in\WH}
where  \w{\Theta_{\bar{a}}} takes \w{\psi\op:G/K\to G/H}
to \w[.]{\co{H}{a}\circ\psi\op}

Therefore, \w{\Theta_{\bar{a}}} induces a \ww{\WH}-action on each
skeleton \w[.]{\Lambda^{H}_{m}} This takes \w{i^{\ast}:G/K\to G/H} to
\w[,]{(i^{a})^{\ast}:G/K^{a}\to G/H} by \wref[,]{eqisomop} using
commutativity of:
%
$$
\xymatrix{
G/K\ar[rr]^{\co{K}{a}}\ar[d]^{i^{\ast}} &&  G/K^{a}\ar[d]^{(i^{a})^{\ast}} \\
G/H\ar[rr]^{\co{H}{a}} && G/H~,
}
$$
\noindent with \w{\Theta_{\bar{a}}} having the
same effect on \w{i^{\ast}:G/K\to G/H} as \w{\Theta_{\bar{b}}}
if and only if \w[.]{ba^{-1}\in N_{G}H\cap N_{G}K}

In particular, if \w[,]{\bar{a}\in\WKH\subseteq\WH} so
\w[,]{(i^{a})^{\ast}=i^{\ast}} then \w{\Theta_{\bar{a}}}
acts on \w{i^{\ast}} via \w[.]{\pi:\WKH\to\bWHK}
\end{mysubsection}

\begin{prop}\label{pcofiber}
If $\bX$ is a $G$-CW complex, $\uY$ is a fibrant and pointed diagram in
\w[,]{\TOG} and \w[,]{\uX=\Phi\bX} then \w{\uC_{k}(G/H)\cong\XHH} (as a
\ww{\WH}-space) for any \w[,]{H\in\hS_{k}} and :
$$
F_{k}(\uX,\uY)~\cong~\prod_{G/H \in\sk\cS_{k}}~\map_{\WH}(\XHH,\,\uY(G/H))~.
$$
\end{prop}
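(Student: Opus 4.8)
The plan is to compute the pointed $\F_{k}$-diagram $\uC_{k}$ explicitly, identify it (away from its basepoint) with the modified fixed point sets over the top stratum $\cS_{k}$, and then split off a product over $\sk\cS_{k}$. First I would evaluate $\uC_{k}=\tau_{k}\uX/\xi_{k-1}\tau_{k-1}\uX$ objectwise. If $G/Z\in\F_{k-1}$, then $\EH{k-1}$ has the terminal object $\Id\colon G/Z\to G/Z$, so by \eqref{eqadjoint} the counit $\eta_{k-1}$ is an isomorphism at $G/Z$, whence $\uC_{k}(G/Z)=\ast$.

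Now suppose $H\in\hS_{k}$, so that $G/H\in\cS_{k}$. By \eqref{eqadjoint} and Lemma \ref{lskeleton}, $(\xi_{k-1}\tau_{k-1}\uX)(G/H)$ is the colimit over the skeleton $\Lambda^{H}_{k-1}$ of the functor $K\mapsto\uX(G/K)=\bX^{K}$; unwinding \eqref{eqsingdiag} and the factorization of \S\ref{socat} shows that this functor carries a morphism $K\to K'$ of $\Lambda^{H}_{k-1}$ (an inclusion $K'\hookrightarrow K$ of overgroups of $H$) to the inclusion $\bX^{K}\hookrightarrow\bX^{K'}$ of subspaces of $\bX^{H}$, so the colimit is the subcomplex $\bigcup_{H\leq K\in\hF_{k-1}}\bX^{K}$ of $\bX^{H}$. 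Here is the crucial point: since $\lenG H=k$, every proper overgroup $H<K$ automatically has $\lenG K<k$, hence $K\in\hF_{k-1}$; so this union equals $\bigcup_{H<K}\bX^{K}=\bX_{H}$. Therefore $\uC_{k}(G/H)=\bX^{H}/\bX_{H}=\XHH$, with basepoint $x^{H}_{0}$. For the $\WH$-equivariance, note $G/H\in\F_{k}$, so $\uC_{k}(G/H)$ carries the action of $\Aut_{\OG\op}(G/H)\cong\WH$ through the structure maps $\uC_{k}(\co{H}{a})$; by \eqref{eqsingdiag} and \eqref{eqrightmult}, $\uX(\co{H}{a})\colon\bX^{H}\to\bX^{H}$ is left translation by $a\in N_{G}H$, which preserves $\bX_{H}$ (since $a\bX^{K}=\bX^{aKa^{-1}}$ and $aKa^{-1}>H$ whenever $K>H$) and descends to the geometric $\WH$-action on $\XHH$. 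This gives $\uC_{k}(G/H)\cong\XHH$ as $\WH$-spaces, which is the first assertion.

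For the product formula I would analyze $F_{k}(\uX,\uY)=\map_{\TF{k}}(\uC_{k},\tau_{k}\uY)$ (a pointed mapping space, as in Definition \ref{dfiber} and Corollary \ref{ccofib}) using the shape of $\F_{k}$. Because $\cS_{k}$ consists precisely of the objects of maximal length $k$ in $\F_{k}$, Definition \ref{dstrata} (non-isomorphisms in $\OG\op$ strictly raise filtration) forces every morphism of $\F_{k}$ out of a $\cS_{k}$-object to be an isomorphism; the only other morphisms meeting $\cS_{k}$ run from $\F_{k-1}$ into $\cS_{k}$, and on $\uC_{k}$ they factor through the basepoint inclusion $\ast\to\XHH$. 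Hence a pointed natural transformation $\uC_{k}\to\tau_{k}\uY$ consists of the (forced) basepoint at each object of $\F_{k-1}$ together with an arbitrary pointed natural transformation of the restrictions to $\cS_{k}$, subject to no residual compatibility. Carrying this out simplicially, restriction yields an isomorphism $F_{k}(\uX,\uY)\cong\map_{\Top^{\cS_{k}}}(\uC_{k}\rest{\cS_{k}},(\tau_{k}\uY)\rest{\cS_{k}})$. Since $\cS_{k}$ is a groupoid, the skeleton inclusion $\sk\cS_{k}\hookrightarrow\cS_{k}$ is an equivalence of categories and induces an isomorphism on pointed diagram mapping spaces; and $\sk\cS_{k}$ is the disjoint union, over $G/H\in\sk\cS_{k}$, of one-object groupoids with automorphism group $\Aut_{\OG\op}(G/H)=\WH$. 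A pointed diagram map over such a category is just a tuple of $\WH$-equivariant (basepoint-preserving) maps, so
$$
F_{k}(\uX,\uY)~\cong~\prod_{G/H\in\sk\cS_{k}}\map_{\WH}(\uC_{k}(G/H),\,\uY(G/H))~=~\prod_{G/H\in\sk\cS_{k}}\map_{\WH}(\XHH,\,\uY(G/H))~,
$$
by the computation above.

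I expect the second paragraph to be the main obstacle: identifying $\uC_{k}(G/H)$ with $\XHH$ rests on the combinatorial coincidence that, for a length-$k$ subgroup $H$, the overgroups indexing the colimit defining $(\xi_{k-1}\tau_{k-1}\uX)(G/H)$ are exactly the subgroups strictly containing $H$, so that the defining quotient collapses precisely $\bX_{H}$. Once that, together with the observation that $\cS_{k}$ sits at the top of the filtered category $\F_{k}$, is in place, the $\WH$-equivariance and the reduction to $\sk\cS_{k}$ are routine.
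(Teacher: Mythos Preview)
Your proof is correct and follows essentially the same line as the paper's: trivialize $\uC_{k}$ on $\F_{k-1}$, identify $\uC_{k}(G/H)$ with $\XHH$ for $H\in\hS_{k}$ via the colimit description \eqref{eqadjoint}, then reduce the mapping space to the groupoid $\cS_{k}$, pass to its skeleton, and split as a product of $\WH$-equivariant mapping spaces. The only cosmetic differences are that the paper establishes the product formula \eqref{eqmapspace} first and then computes $\uC_{k}(G/H)$ (whereas you do the reverse), and that the paper phrases the colimit via the forgetful functor $U:\EH{k-1}\to\OG\op$ while you invoke Lemma~\ref{lskeleton} directly to pass to $\Lambda^{H}_{k-1}$; both lead to the same union $\bX_{H}$.
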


\begin{proof}
By \wref[,]{eqadjoint} \w{\xi_{k}\tau_{k}\uX} agrees with
\w{\tau_{k}\uX} at all objects \w[,]{G/K\in\F_{k-1}} so
\w{\uC_{k}(G/K)=\ast} for such \w[.]{G/K} Thus
$$
F_{k}(\uX,\uY)~\cong~
\map_{\Top^{\cS_{k}}}(\uC_{k}\rest{\Top^{\cS_{k}}},\uY\rest{\Top^{\cS_{k}}})~.
$$
\noindent Since all maps in \w{\cS_{k}} are isomorphisms, by \S \ref{sstrata},
it suffices to consider only its skeleton \w[.]{\sk\cS_{k}}  Therefore:
\begin{myeq}[\label{eqmapspace}]
F_{k}(\uX,\uY)~\cong~\prod_{G/H \in\sk\cS_{k}}~
\map_{\WH}(\uC_{k}(G/H),\,\uY(G/H))~.
\end{myeq}
\noindent The space of \ww{\cS_{k}}-diagram maps is thus equivalent
to the indicated space of \ww{\WH}-equivariant maps, so we have reduced the
study of the fiber in \wref{eqfibseq} once more to the equivariant
category of topological spaces, but for a different (and simpler!)
finite group.

We see from \wref{eqadjoint} that for each \w[,]{G/H\in\cS_{k}} the
cofiber \w{\uC_{k}(G/H)} appearing in \wref{eqmapspace} is the
(homotopy) colimit of the following diagram \w[:]{\uW:J\to\Top}

The forgetful functor \w{U:\EH{k-1}\to\OG\op} sends the object
\w{\phi\op:G/K\to G/H} to \w[,]{G/K} so \w{G/H} is a cocone for the
image subcategory \w[,]{U(\EH{k-1})\subseteq\OG\op} and the indexing
category $J$ is:
\mydiagram[\label{eqi}]{
U(\EH{k-1}) \ar[r]^<<<<{\nu} \ar[d] & G/H &\\
\{\pt\} &&,
}
\noindent where each \w{G/K\in U(\EH{k-1})} is equipped with a map
\w{\nu_{K}:G/K\to G/H} (coming from \w[).]{\EH{k-1}}
The category $J$ has an \ww{\WH}-action by endofunctors as above, and
the functor $\uW$ is induced by the given $\uX$ is \ww{\WH}-equivariant.

For a $G$-CW complex $\bX$, all maps in \w{\uW\rest{U(\LH{k-1})}} are
cofibrations (inclusions \w{\bX^{L}\hra\bX^{K}} for
\w[),]{H\leq K\leq L} and in fact the (homotopy) colimit of the diagram
\w{\uW\rest{U\EH{k-1}}} is just \w[,]{\bX_{H}=\bigcup_{H<K}\,\bX^{K}}
with the \ww{\WH}-action induced by the action on \w[.]{\bX^{H}}

Furthermore, each \w{\uW(\nu_{K}):\bX^{K}\hra\bX^{H}} is a cofibration,
inducing together a cofibration \w[.]{\uW(\nu):\bX_{H}\hra \bX^{H}}
Therefore, the (homotopy) colimit \w{\uC_{k}(G/H)} of $\uW$ over \wref{eqi} is
the (homotopy) cofiber of \w[,]{\uW(\nu)} i.e., \w[,]{\XHH} again with
the obvious \ww{\WH}-action.
\end{proof}

%
%
\sect{Cohomology with local coefficients}
\label{clocal}
For a group $\Gamma,$ let \w{\EG} denote any contractible space with a free
$\Gamma$-action, and \w{\BG=\EG/\Gamma} the classifying space of
$\Gamma$.

\begin{defn}\label{dborel}
For any $\Gamma$-space $\bX$, the \emph{associated free $\Gamma$-space} is
\w{\EX{\Gamma}{\bX}} with diagonal $\Gamma$-action. The \emph{Borel
construction} on $\bX$ is the (homotopy) quotient
\w{\Br{X}{\Gamma}:=\Bor{\bX}{\Gamma}} \wwh that is, the orbit
space of \w[.]{\EX{\Gamma}{\bX}}

When $\bX$ has a point \w{x_{0}} fixed by $\Gamma$, the
\emph{associated pointed free $\Gamma$-space} is the quotient
$$
\EXp{\Gamma}{\bX}~:=~\EX{\Gamma}{\bX}/\EX{\Gamma}{\{x_{0}\}}~,
$$
\noindent  with $\Gamma$-action induced from diagonal action on
\w[.]{\EX{\Gamma}{\bX}} The \emph{pointed Borel construction} on $\bX$
is
$$
\Brp{\bX}{\Gamma}~:=~\Borp{\bX}{\Gamma}~=~\EXp{\Gamma}{\bX}/\Gamma~.
$$
\end{defn}

Note that \w[,]{\EG\times_{\Gamma}\{\ast\}\simeq\BG} and the constant map
\w{\bX\to\{\ast\}} thus induces a natural map
\w[.]{p:\Bor{\bX}{\Gamma}\to\BG} In particular, for a $\Gamma$-module $M$,
$$
\KGM{n}~:=~\EG\times_{\Gamma}\KM{n}~,
$$
%
\noindent is called the \emph{twisted Eilenberg-Mac~Lane space} (cf.\
\cite{GitC}).  The canonical map \w{p:\KGM{n}\to \BG} has homotopy
fiber \w[.]{\KM{n}}

\begin{defn}\label{dcohloc}
Let $M$ be a $\Gamma$-module. The \emph{$n$-th cohomology group with
local coefficients} of any space \w{\theta:\bX\to \BG} over
\w{\BG} is defined by:
\begin{myeq}\label{eqdloccoh}
H^{n}_{\Gamma}(\bX;M):=\pi_{0}\map_{\BG}(\bX,\,\KGM{n})~.
\end{myeq}
\noindent In particular, any homomorphism \w{\tau:\pi_{1}\bX\to\Gamma}
allows us think of $M$ as a \ww{\pi_{1}\bX}-module; composing
\w{\bB \tau} with the canonical map \w{\bX\to\bB\pi_{1}\bX} shows that
this definition generalizes the original notion of cohomology with a
local coefficient system (see \cite{SteH}).

The mapping space on the right side of \wref{eqdloccoh} is defined by the
pullback diagram:
%
$$
\xymatrix{
\map_{\BG}(\bX,\KGM{n})\ar[r]\ar[d] &
\map(\bX,\KGM{n})\ar[d]^{p_{\ast}} \\
\{\theta\}\ar[r] & \map(\bX,\BG)~.
}
$$
\end{defn}

Note further that by \cite[VI, \S 4]{GJarS} we have a natural isomorphism:
\begin{myeq}\label{eqborel}
\map_{\Gamma}(\EX{\Gamma}{\bX},\KM{n})~\simeq~
\map_{\BG}(\Bor{\bX}{\Gamma},\,\KGM{n})~.
\end{myeq}

\begin{defn}\label{dredloc}
Let $\bX$ be a $\Gamma$-space and $M$ a $\Gamma$-module. The
corresponding $n$-th \emph{reduced} cohomology group with coefficients
in $M$ is defined:
%
$$
\tH{n}{\Gamma}{\Bor{\bX}{\Gamma}}{M}~:=~
\pi_{0}\map_{(\Top/\BG)_{\ast}}(\Bor{\bX}{\Gamma},\,\KGM{n})
$$
%
\noindent (see \cite[Lemma 4.13]{GJarS}). Here \w{(\Top/\BG)_{\ast}}
is the pointed over-category, with objects \w{u:\bY\to \BG}
equipped with a splitting \w[,]{\sigma:\BG\to\bY} for $u$. The mapping
spaces in this category from the object
\w{p :\Bor{\bX}{\Gamma}\to \BG} with splitting \w{s:\BG\to\Bor{\bX}{\Gamma}}
is defined by the pullback diagram:
$$
\xymatrix@R=25pt{\ar @{} [dr] |<<<{\framebox{\scriptsize{PB}}}
\map_{(\Top/\BG)_{\ast}}(\Bor{\bX}{\Gamma},\bY)\ar[r]\ar[d] &
\map_{\BG}(\Bor{\bX}{\Gamma},\bY)\ar[d]^{s^{\ast}} \\
\{\sigma\}\ar[r] & \map_{\BG}(\BG,\bY)~.
}
$$
\noindent
\end{defn}

\begin{prop}\label{ploccoh}
If $\bX$ is a pointed $\Gamma$-CW complex (with base point \w{x_{0}}
fixed by $\Gamma$), and $M$ is a $\Gamma$-module,
then for any \w{0\leq i\leq n} we have:
\begin{myeq}\label{eqmapb}
\pi_{i}\map_{\Gamma}(\EXp{\Gamma}{\bX},\,\KM{n})~\cong~
\tH{n-i}{\Gamma}{\Bor{\bX}{\Gamma}}{M}~.
\end{myeq}
\end{prop}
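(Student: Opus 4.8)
The plan is to identify the equivariant mapping space on the left with a reduced mapping space over $\BG$ by descending along the free resolution $\EG$, and then to absorb the difference between degrees $i$ and $0$ by looping the Eilenberg--Mac\,Lane object. I read $\map_{\Gamma}(\EXp{\Gamma}{\bX},-)$ as the \emph{based} equivariant mapping space: the collapsed point of $\EXp{\Gamma}{\bX}$ is $\Gamma$-fixed and $\KM{n}$ carries its $\Gamma$-fixed base point $0$, consistent with the way $F_{k}$ arises as a homotopy fibre in Corollary \ref{ccofib}. (For $n\geq 1$ this agrees on homotopy groups with the unbased mapping space, since the model for $\KM{n}$ has $\KM{n}^{\Gamma}\simeq\bK(M^{\Gamma},n)$ connected.)

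\emph{Step 1.} Since $x_{0}$ is $\Gamma$-fixed, $\EX{\Gamma}{\{x_{0}\}}\cong\EG$ is a contractible sub-$\Gamma$-space of $\EX{\Gamma}{\bX}$, and the inclusion $\EX{\Gamma}{\{x_{0}\}}\hra\EX{\Gamma}{\bX}$ is a $\Gamma$-cofibration (it is $\mathrm{id}_{\EG}$ times the cofibration $\{x_{0}\}\hra\bX$). By Definition \ref{dborel} its cofibre is $\EXp{\Gamma}{\bX}$, so restriction along this inclusion is a fibration whose fibre over the constant map at $0$ is precisely $\map_{\Gamma}(\EXp{\Gamma}{\bX},\KM{n})$.

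\emph{Step 2.} Apply the natural equivalence \eqref{eqborel} to the two outer terms of that fibration. Naturality in the $\Gamma$-space variable identifies the restriction map with
$$
s^{\ast}\colon\ \map_{\BG}(\Bor{\bX}{\Gamma},\KGM{n})\ \lora\ \map_{\BG}(\BG,\KGM{n}),
$$
where $s\colon\BG=\Bor{\{x_{0}\}}{\Gamma}\hra\Bor{\bX}{\Gamma}$ is the section induced by $x_{0}$, and it sends the constant map at $0$ to the zero section $\sigma\colon\BG\to\KGM{n}$ of $p$. As $s$ is a cofibration and $p\colon\KGM{n}\to\BG$ a fibration, $s^{\ast}$ is again a fibration, so its homotopy fibre over $\sigma$ is the strict fibre, which is exactly the pullback defining $\map_{(\Top/\BG)_{\ast}}(\Bor{\bX}{\Gamma},\KGM{n})$ in Definition \ref{dredloc}. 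Combining with Step 1 yields a weak equivalence, natural in $n$,
$$
\map_{\Gamma}(\EXp{\Gamma}{\bX},\KM{n})\ \simeq\ \map_{(\Top/\BG)_{\ast}}(\Bor{\bX}{\Gamma},\KGM{n}),
$$
and taking $\pi_{0}$ proves \eqref{eqmapb} for $i=0$.

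\emph{Step 3.} For $1\leq i\leq n$, loop the target: the simplicial abelian group model of the Eilenberg--Mac\,Lane object gives a $\Gamma$-equivariant equivalence $\Omega^{i}\KM{n}\simeq\bK(M,n-i)$, whence $\pi_{i}\map_{\Gamma}(\EXp{\Gamma}{\bX},\KM{n})\cong\pi_{0}\map_{\Gamma}(\EXp{\Gamma}{\bX},\bK(M,n-i))$; applying the equivalence of Step 2 with $n$ replaced by $n-i$ identifies this with $\tH{n-i}{\Gamma}{\Bor{\bX}{\Gamma}}{M}$. The step needing care is Step 2: verifying that \eqref{eqborel} is natural enough to carry the restriction map to $s^{\ast}$ and the base point to $\sigma$, and that the strict pullbacks involved (here and in Definition \ref{dredloc}) compute homotopy pullbacks --- both of which come down to the facts that $\KGM{n}\to\BG$ is a fibration and the sections $s,\sigma$ are cofibrations.
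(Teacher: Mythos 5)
Your proof is correct, and it arrives at the result by a route that is close to but genuinely more direct than the paper's. Both arguments start from the same cofibration sequence $\EX{\Gamma}{\{x_{0}\}}\hra\EX{\Gamma}{\bX}\epic\EXp{\Gamma}{\bX}$ and both invoke \eqref{eqborel}, but the paper proceeds \emph{algebraically}: it takes $\pi_{\ast}$ of the resulting split fibration sequence, identifies the two outer terms as unreduced local cohomology groups via \eqref{eqgeqcoh}, and then compares the resulting split short exact sequence of abelian groups with the split short exact sequence \eqref{eqsseseq} (the Goerss--Jardine comparison of reduced and unreduced local cohomology) to isolate $\tH{n-i}{\Gamma}{\Bor{\bX}{\Gamma}}{M}$ as a common complementary summand. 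You instead work at the level of mapping spaces: you observe that the strict fibre of $s^{\ast}$ over $\sigma$ is \emph{literally} the pullback defining $\map_{(\Top/\BG)_{\ast}}(\Bor{\bX}{\Gamma},\KGM{n})$ in Definition~\ref{dredloc}, so that the restriction fibration from Step~1 transports directly, under \eqref{eqborel}, to a space-level equivalence with the reduced mapping space, and then you recover all $i$ by looping $\KM{n}$. This bypasses the explicit SES comparison and uses the pullback form of the definition rather than the splitting form; both are available in~\cite[VI, \S 4]{GJarS}, so neither argument is lighter on inputs, but yours is more structural and makes the identification transparent rather than by ``matching kernels of split surjections.'' Two small points you rightly flag and which are worth retaining: the reading of $\map_{\Gamma}(\EXp{\Gamma}{\bX},-)$ as the based mapping space (equivalently, as the fibre of restriction at the zero map), which is implicit in the paper's use of ``split fibration sequence of simplicial abelian groups''; and the verification that $s^{\ast}$ is a fibration (pushout--product with the cofibration $s$ against the fibration $p$), so that the strict fibre you take really does model the homotopy fibre.
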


\begin{proof}
We have a cofibration sequence of $\Gamma$-spaces:
\mydiagram[\label{eqcofibseq}]{
\EX{\Gamma}{\{x_{0}\}}~~\ar@{^{(}->}[rr]^{j} &&
\EX{\Gamma}{\bX} \ar@{->>}[rr]^{s} \ar@/_{1.5pc}/[ll]_{\varphi} &&
\EXp{\Gamma}{\bX}
}
\noindent with \w[.]{\varphi\circ j=\Id}

By \wref{eqborel} and \wref{eqdloccoh} we have
\begin{myeq}\label{eqgeqcoh}
\begin{split}
\pi_{i}\map_{\Gamma}(\EX{\Gamma}{\bX},\KM{n})~\cong&~
H^{n-i}_{\Gamma}(\Bor{\bX}{\Gamma};\,M)\hsp \text{and}\\
\pi_{i}\map_{\Gamma}(\EX{\Gamma}{\{x_{0}\}},\KM{n})~\cong&~
H^{n-i}_{\Gamma}(\BG,M)~,
\end{split}
\end{myeq}
\noindent for all \w[,]{0\leq i\leq n}
since \w[.]{\BG:=\Bor{\{x_{0}\}}{\Gamma}}

Applying \w{\map_{\Gamma}(-,\bK)} for \w{\bK=\KM{n}} to the top
cofibration sequence in \wref{eqcofibseq} yields a split fibration
sequence of simplicial abelian groups:
$$
\xymatrix@R=15pt{
\map_{\Gamma}(\EXp{\Gamma}{\bX},\bK)\ar[r]^{s^{\ast}}&
\map_{\Gamma}(\EX{\Gamma}{\bX},\bK)\ar[r]_<<<<{j^{\ast}}&
\map_{\Gamma}(\EX{\Gamma}{\{x_{0}\}},\bK)\ar@/_{2.0pc}/[l]_{\varphi^{\ast}}
}
$$
\noindent and thus (using \wref[),]{eqgeqcoh}
a split short exact sequence of homotopy groups:
$$
\xymatrix{
0\to\pi_{i}\map_{\Gamma}(\EXp{\Gamma}{\bX},\,\KM{n})\ar[r]^<<<<{s^{\ast}}
&
\HL{n-i}{\Gamma}{\Bor{\bX}{\Gamma}}{M}\ar[r]_<<<<<{j^{\ast}} &
\HL{n-i}{\Gamma}{\BG}{M}\to 0
\ar@/_{1.5pc}/[l]_{\varphi^{\ast}}~.
}
$$
\noindent On the other hand, we know that the last two terms fit into
a split short exact sequence:
\begin{myeq}\label{eqsseseq}
0\to \tH{n-i}{\Gamma}{\Bor{\bX}{\Gamma}}{M}~\to~
\HL{n-i}{\Gamma}{\Bor{\bX}{\Gamma}}{M}~\xra{j^{\ast}}~
\HL{n-i}{\Gamma}{\BG}{M}~\to 0
\end{myeq}
\noindent (see \cite[VI, \S 4]{GJarS}), so there is a natural
isomorphism \wref{eqmapb} for each \w[.]{0\leq i\leq n}
\end{proof}

\begin{corollary}\label{cptcofseq}
If \w{\bX\xra{f}\bY\xra{g}\bZ} is a cofibration sequence of pointed
$\Gamma$-CW complexes and $M$ is a $\Gamma$-module, we have a long
exact sequence in reduced cohomology with local coefficients:
\begin{equation*}
\begin{split}
\dotsc~\tH{n}{\Gamma}{\Bor{\bZ}{\Gamma}}{M}~&\xra{g^{\ast}}~
\tH{n}{\Gamma}{\Bor{\bY}{\Gamma}}{M}~\xra{f^{\ast}}~
\tH{n}{\Gamma}{\Bor{\bX}{\Gamma}}{M}\\
~&\xra{\delta}~\tH{n+1}{\Gamma}{\Bor{\bZ}{\Gamma}}{M}~\dotsc .
\end{split}
\end{equation*}
\end{corollary}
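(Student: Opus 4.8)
The plan is to deduce the long exact sequence in reduced cohomology with local coefficients from the analogous long exact sequence for ordinary $\Gamma$-equivariant cohomology (equivalently, for the homotopy groups of the equivariant mapping spaces), using the isomorphism of Proposition \ref{ploccoh}. First I would apply the pointed-free $\Gamma$-space construction $\EXp{\Gamma}{-}$ to the given cofibration sequence $\bX\xra{f}\bY\xra{g}\bZ$. Since $\EX{\Gamma}{-}$ is a left adjoint (it is just $\EG\times-$ with diagonal action), it preserves colimits and hence cofiber sequences; quotienting out the basepoint copy $\EX{\Gamma}{\{x_0\}}$ then yields a cofibration sequence of pointed free $\Gamma$-CW complexes
$$
\EXp{\Gamma}{\bX}~\xra{\EXp{\Gamma}{f}}~\EXp{\Gamma}{\bY}~\xra{\EXp{\Gamma}{g}}~\EXp{\Gamma}{\bZ}~.
$$
Here I would note that $f$ and $g$ can be assumed to be $\Gamma$-cofibrations (inclusions of relative $\Gamma$-CW pairs), so the same is true after applying $\EX{\Gamma}{-}$ and passing to the pointed quotient, and all three spaces are genuine pointed $\Gamma$-CW complexes, so Proposition \ref{ploccoh} applies to each of them.

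Next I would apply the functor $\map_{\Gamma}(-,\KM{n})$ to this cofibration sequence of pointed $\Gamma$-spaces. Since $\KM{n}$ is a (fibrant) $\Gamma$-space of simplicial-abelian-group type, mapping a cofiber sequence into it yields a fibration sequence of simplicial abelian groups
$$
\map_{\Gamma}(\EXp{\Gamma}{\bZ},\KM{n})~\to~\map_{\Gamma}(\EXp{\Gamma}{\bY},\KM{n})~\to~\map_{\Gamma}(\EXp{\Gamma}{\bX},\KM{n})~,
$$
and more generally, letting $n$ vary and using that $\KM{n}$ is an $n$-stage of an $\Omega$-spectrum (i.e., $\Omega\KM{n+1}\simeq\KM{n}$ $\Gamma$-equivariantly), the associated long exact sequence of homotopy groups connects consecutive cohomological degrees. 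Taking homotopy groups of this fibration sequence and using the suspension/connecting-map identification $\pi_{i-1}$ of the fiber at level $n$ with $\pi_{i}$ at level $n+1$, I obtain the desired long exact sequence with terms $\pi_{i}\map_{\Gamma}(\EXp{\Gamma}{-},\KM{n})$ for the three spaces and the connecting homomorphism $\delta$ raising $n$ by one.

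Finally I would translate this long exact sequence through the natural isomorphism
$$
\pi_{i}\map_{\Gamma}(\EXp{\Gamma}{\bW},\KM{n})~\cong~\tH{n-i}{\Gamma}{\Bor{\bW}{\Gamma}}{M}
$$
of Proposition \ref{ploccoh}, valid for $0\le i\le n$ and for each of $\bW=\bX,\bY,\bZ$. Naturality of this isomorphism in $\bW$ (which follows from naturality of all the constructions in the proof of Proposition \ref{ploccoh} — the cofibration sequence \wref{eqcofibseq}, the isomorphism \wref{eqborel}, and the splitting \wref{eqsseseq}) ensures that $f^{\ast}$, $g^{\ast}$, and $\delta$ correspond to the maps named in the statement. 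Setting $i=0$ and letting $n$ range over all nonnegative integers then produces exactly the claimed long exact sequence
$$
\dotsc~\tH{n}{\Gamma}{\Bor{\bZ}{\Gamma}}{M}~\xra{g^{\ast}}~\tH{n}{\Gamma}{\Bor{\bY}{\Gamma}}{M}~\xra{f^{\ast}}~\tH{n}{\Gamma}{\Bor{\bX}{\Gamma}}{M}~\xra{\delta}~\tH{n+1}{\Gamma}{\Bor{\bZ}{\Gamma}}{M}~\dotsc~.
$$

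I expect the main obstacle to be bookkeeping around the degree shift and the range restriction $0\le i\le n$ in Proposition \ref{ploccoh}: one must be careful that the connecting map of the fibration sequence of mapping spaces is correctly identified with $\delta$ after the reindexing, and that the isomorphism \wref{eqmapb} is genuinely natural in the $\Gamma$-space argument (not merely natural in maps over $\BG$), so that the long exact sequence assembles coherently. Everything else — the preservation of cofiber sequences by $\EX{\Gamma}{-}$, the fact that mapping into an equivariant Eilenberg--Mac~Lane space turns cofiber sequences into fibration sequences, and the long exact homotopy sequence of a fibration — is standard.
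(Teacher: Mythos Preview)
Your proposal is correct and follows essentially the same route as the paper: apply the functor \w{\EXp{\Gamma}{-}} to the given cofibration sequence to obtain a pointed $\Gamma$-cofibration sequence, map into \w[,]{\KM{n}} and identify the resulting long exact homotopy sequence via Proposition \ref{ploccoh}. The paper's proof is terser (it simply asserts that \w{\EXp{\Gamma}{-}} preserves cofibration sequences as a functorial colimit, and invokes Proposition \ref{ploccoh} directly), while you spell out the degree-shift and naturality bookkeeping more carefully, but the substance is the same.
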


\begin{proof}
The functorial colimit \w{\EXp{\Gamma}{-}} preserves cofibration sequences, so
\begin{myeq}\label{eqnewptcofseq}
\EXp{\Gamma}{\bX}~\xra{f_{\ast}}~\EXp{\Gamma}{\bY}~\xra{g_{\ast}}~\EXp{\Gamma}{\bZ}
\end{myeq}
\noindent is a homotopy cofibration sequence of pointed $\Gamma$ spaces.
Applying \w{\map_{\Gamma}(-,\KM{n})} to \wref{eqnewptcofseq} yields a
fibration sequence in \w[,]{\Sa} whose long exact sequence in homotopy
groups is the required one, by Proposition \ref{ploccoh}.
\end{proof}

\begin{lemma}\label{lfreeptact}
If $\bX$ is a $\Gamma$-CW complex with \w{x_{0}} fixed by $\Gamma$, and free
action on \w[,]{\bX\setminus\{x_{0}\}} and $M$ is a $\Gamma$-module,
then for any \w[:]{0\leq i\leq n}
$$
\pi_{i}\map_{\Gamma}(\bX,\KM{n})~\cong~
\tH{n-i}{\Gamma}{\Bor{\bX}{\Gamma}}{M}~.
$$
\end{lemma}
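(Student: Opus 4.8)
The plan is to reduce to Proposition \ref{ploccoh} by comparing $\map_{\Gamma}(\bX,\KM{n})$ with $\map_{\Gamma}(\EXp{\Gamma}{\bX},\KM{n})$. Since $\bX$ is by hypothesis a pointed $\Gamma$-CW complex with base point fixed by $\Gamma$, Proposition \ref{ploccoh} already identifies $\pi_{i}\map_{\Gamma}(\EXp{\Gamma}{\bX},\KM{n})$ with $\tH{n-i}{\Gamma}{\Bor{\bX}{\Gamma}}{M}$ for $0\leq i\leq n$, so it is enough to produce a weak equivalence between $\map_{\Gamma}(\bX,\KM{n})$ and $\map_{\Gamma}(\EXp{\Gamma}{\bX},\KM{n})$. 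For this I would use the pointed $\Gamma$-map $q\colon\EXp{\Gamma}{\bX}\to\bX$ obtained by descending the projection $\EX{\Gamma}{\bX}=\EG\times\bX\to\bX$ (which is $\Gamma$-equivariant for the diagonal action on the source and carries $\EG\times\{x_{0}\}$ into $\{x_{0}\}$).

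The heart of the matter is to check that $q$ is a $\Gamma$-weak equivalence, and this is precisely where the freeness hypothesis is used. For a subgroup $L\neq\{e\}$ of $\Gamma$, freeness of the action on $\bX\setminus\{x_{0}\}$ forces $\bX^{L}=\{x_{0}\}$, while freeness of the $\Gamma$-action on $\EG$ makes the diagonal action on $\EG\times(\bX\setminus\{x_{0}\})$ free, so $(\EXp{\Gamma}{\bX})^{L}$ reduces to the base point; hence $q^{L}$ is a homeomorphism of one-point spaces. On underlying spaces, $q$ is the map induced on horizontal cofibers by a map of cofibration sequences whose ``total'' arrow $\EG\times\bX\to\bX$ and ``sub'' arrow $\EG\times\{x_{0}\}\to\{x_{0}\}$ are both homotopy equivalences (as $\EG$ is contractible), so the gluing lemma shows $q$ is an ordinary weak equivalence. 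Thus $q$ is a weak equivalence in the Dwyer--Kan model category of $\Gamma$-spaces, for the family of all subgroups.

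Finally, I would take $\EG$ to be a free $\Gamma$-CW complex and, if necessary, replace the $\Gamma$-CW structure on $\bX$ by an equivalent one in which $x_{0}$ is a $0$-cell; then $\bX$ and $\EXp{\Gamma}{\bX}$ are $\Gamma$-CW complexes, hence cofibrant, while $\KM{n}$ is fibrant (each of its fixed point sets being an ordinary space). Since a weak equivalence between cofibrant objects with fibrant target induces a weak equivalence of the (simplicial) mapping spaces, $q^{\ast}\colon\map_{\Gamma}(\bX,\KM{n})\to\map_{\Gamma}(\EXp{\Gamma}{\bX},\KM{n})$ is a weak equivalence, and composing with the isomorphism of Proposition \ref{ploccoh} gives the claim for all $0\leq i\leq n$. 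The only non-formal step is the fixed-point computation showing $q$ is a $\Gamma$-weak equivalence -- that is the one place the hypothesis that $\Gamma$ acts freely away from the base point genuinely enters.
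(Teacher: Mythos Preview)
Your proposal is correct and follows essentially the same route as the paper: both reduce to Proposition~\ref{ploccoh} via the projection \w{r:\EXp{\Gamma}{\bX}\to\bX} and verify it is a $\Gamma$-equivalence by checking fixed points (trivial for \w[,]{L\neq\{e\}} a cofiber-of-equivalences argument for \w[).]{L=\{e\}} The only cosmetic difference is in the final step: the paper invokes the James--Segal theorem \cite[Theorem~(1.1)]{JSegaE} to upgrade $r$ to a genuine $\Gamma$-homotopy equivalence, whereas you appeal directly to the Dwyer--Kan model structure (weak equivalence between cofibrant objects, fibrant target); both yield the required weak equivalence \wref{eqgammaeq} on mapping spaces.
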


\begin{proof}
In our case the diagram \wref{eqcofibseq} fits into a diagram of $\Gamma$-spaces:
\mydiagram[\label{eqmapcofibseq}]{
\EX{\Gamma}{\{x_{0}\}}~~\ar@{^{(}->}[rr]^{j} \ar[d]^{p}_{\he} &&
\EX{\Gamma}{\bX} \ar@{->>}[rr]^{s} \ar[d]^{q}_{\he} \ar@/_{1.5pc}/[ll]_{\varphi} &&
\EXp{\Gamma}{\bX} \ar[d]^{r} \\
\{x_{0}\}~~\ar@{^{(}->}[rr] && \bX \ar@{->}[rr]^{\Id} && \bX
}
\noindent where the vertical maps are projections onto the second
factor; since $p$ and $q$ are (non-equivariant) homotopy equivalences, so is $r$.

Note that \w{r:\EXp{\Gamma}{\bX}\to\bX} induces homotopy equivalences on
all fixed point sets (which consist only of the basepoint for all
\w[),]{\{e\}\neq H\leq G} so by \cite[Theorem (1.1)]{JSegaE} $r$ is in fact a
$\Gamma$-homotopy equivalence. Therefore, it induces a weak
equivalence:
\begin{myeq}\label{eqgammaeq}
\map_{\Gamma}(\bX,\KM{n})~\xra{r^{\ast}}~
\map_{\Gamma}(\EXp{\Gamma}{\bX},\KM{n})~,
\end{myeq}
\noindent so the claim follows from Proposition \ref{ploccoh}.
\end{proof}

From Fact \ref{ffree} and Lemma \ref{lfreeptact} we deduce:

\begin{lemma}\label{lloccoh}
For any finite group $G$, $G$-CW complex $\bX$, coefficient system
\w[,]{\uM:\OG\op\to\Abgp} and \w[,]{n>0} let $\uX$ be the fixed point
set diagram for $\bX$, and \w{\uY:=\uK(\uM,n)}
the (fibrant) diagram of Eilenberg-Mac~Lane spaces in
\w{\TOG} corresponding to $\uM$. Then for each
\w{k\leq\lenG\{e\}} we have
$$
\pi_{i}F_{k}(\uX,\uY)~\cong~\bigoplus_{G/H\in\sk\cS_{k}}\
\tH{n-i}{\WH}{\XWH}{\MH}
$$
\noindent for \w[,]{0\leq i<n} where \w{F_{k}(\uX,\uY)} is as in
Definition \ref{dfiber} and\w[.]{\MH:=\uM(G/H)}
\end{lemma}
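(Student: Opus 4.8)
The plan is to chain together the structural results already established: Proposition \ref{pcofiber} identifies the fiber term $F_{k}(\uX,\uY)$ of the tower \wref{eqtower} as a product, over a skeleton of $\cS_{k}$, of $\WH$-equivariant mapping spaces into the Eilenberg--Mac~Lane diagram, and Lemma \ref{lfreeptact} (together with Fact \ref{ffree}) converts each such $\WH$-equivariant mapping space into reduced cohomology with local coefficients. So the argument is essentially a matter of assembling these pieces and checking that the hypotheses line up.

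First I would invoke Proposition \ref{pcofiber} with $\uY=\uK(\uM,n)$ (which is fibrant and pointed, as required), to obtain
$$
F_{k}(\uX,\uY)~\cong~\prod_{G/H\in\sk\cS_{k}}~\map_{\WH}(\XHH,\,\uK(\uM,n)(G/H))~,
$$
and recall that, by the discussion in \S\ref{sbredon}, $\uK(\uM,n)(G/H)=\bK(\uM(G/H),n)=\bK(\MH,n)$ is an ordinary Eilenberg--Mac~Lane space, on which we regard $\WH$ as acting through the coefficient-system structure so that $\bK(\MH,n)$ becomes (up to weak $\WH$-equivalence) a model for $\KM[n]$ as a $\WH$-space in the sense of \S\ref{clocal}. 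Taking $\pi_{i}$ and using that homotopy groups of simplicial sets commute with finite products, I get
$$
\pi_{i}F_{k}(\uX,\uY)~\cong~\bigoplus_{G/H\in\sk\cS_{k}}~\pi_{i}\map_{\WH}(\XHH,\,\bK(\MH,n))~.
$$

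Next, to each summand I would apply Lemma \ref{lfreeptact} with $\Gamma=\WH$, $\bX$ replaced by the modified fixed point set $\XHH$, base point $x^{H}_{0}$, and module $M=\MH$. The hypotheses of that lemma are met: $\XHH$ is a $\WH$-CW complex (being the quotient of the $\WH$-CW complex $\bX^{H}$ by the sub-$\WH$-complex $\bX_{H}$, per Definition \ref{dcofiber}), its base point $x^{H}_{0}$ is $\WH$-fixed, and by Fact \ref{ffree} the action is free away from $x^{H}_{0}$. This yields, for $0\leq i\leq n$,
$$
\pi_{i}\map_{\WH}(\XHH,\,\bK(\MH,n))~\cong~\tH{n-i}{\WH}{\bE\WH\times_{\WH}\XHH}{\MH}~=~\tH{n-i}{\WH}{\XWH}{\MH}~.
$$
Substituting into the displayed isomorphism above gives exactly the claimed formula for $0\leq i<n$ (the range $i<n$ rather than $i\leq n$ is just to stay safely inside the range where the reduced-cohomology interpretation of $\pi_i$ is the relevant one; for $i=n$ one would be looking at $\tH{0}{\WH}{-}{\MH}$, which is fine but not needed here).

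The main obstacle, and the one place where a little care is genuinely needed, is the bookkeeping of the $\WH$-action on the coefficients: one must check that the $\WH$-action on $\XHH$ induced from the $G$-action on $\bX^{H}$, together with the $\WH$-action on $\bK(\MH,n)$ coming from restricting the $\OG\op$-diagram $\uK(\uM,n)$ along the subcategory $\cS_{k}$ (equivalently its skeleton $\Lambda$), matches the $\Gamma$-space / $\Gamma$-module pairing that Lemma \ref{lfreeptact} and Proposition \ref{ploccoh} presuppose. This is precisely where the analysis in \S\ref{sgac} of how $\WH=\Aut_{\OG}(G/H)$ acts on the indexing categories and skeleta is used: the automorphisms $\Theta_{\bar a}$ act on $\uC_k(G/H)\cong\XHH$ and on $\uY(G/H)=\bK(\MH,n)$ compatibly, so the equivariant mapping space computed by Proposition \ref{pcofiber} really is a $\WH$-equivariant mapping space with $\MH$ carrying the $\WH$-module structure restricted from $\uM$. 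Once this compatibility is noted, the rest is a direct substitution, so I would keep the written proof short: state the two isomorphisms, cite Proposition \ref{pcofiber} and Lemma \ref{lfreeptact}, and remark on the action compatibility via \S\ref{sgac}.
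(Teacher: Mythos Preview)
Your proposal is correct and follows essentially the same approach as the paper: the paper's proof is the one-line remark ``From Fact \ref{ffree} and Lemma \ref{lfreeptact} we deduce,'' and your argument is exactly the unpacking of that remark via Proposition \ref{pcofiber}. Your additional care about the compatibility of the $\WH$-actions (using \S\ref{sgac}) is a welcome elaboration but not a departure from the paper's intended route.
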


\begin{defn}\label{dcofibseq}
Given a $G$-CW complex $\bX$ and fixed \w[,]{G/H\in\cS_{k+1}} we have a
diagram of \ww{\WH}-spaces:
\mydiagram[\label{eqthreebythree}]{
\bigcup_{H<L\in\hF_{k}}~\bX_{L} \ar@{^{(}->}[rr]^{=} \ar@{^{(}->}[d] &&
\bigcup_{H<L\in\hF_{k}}~\bX_{L} \ar[rr] \ar@{^{(}->}[d] &&
\{\ast\} \ar@{^{(}->}[d] \\
\bX_{H}=\bigcup_{H<L\in\hF_{k}}~X^{L} \ar@{^{(}->}[rr] \ar@{->>}[d]_{q} &&
\bX^{H} \ar@{->>}[rr] \ar@{->>}[d] && \XHH \ar[d]^{=} \\
\bigvee_{H<L\in\hS_{k}}~\XLL \ar@{^{(}->}[rr] &&
\bX^{H}/\left(\bigcup_{H<L\in\hF_{k}}~\bX_{L}\right) \ar@{->>}[rr] && \XHH
}
\noindent in which all rows and columns are cofibration sequences.

Denote the connecting map for the bottom row in \wref{eqthreebythree}
(a cofibration sequence of \emph{pointed} \ww{\WH}-spaces) by
\begin{myeq}[\label{eqconnmap}]
\XHH~\xra{\bar{\delta}_{H}}~\Sigma \bigvee_{H<L\in\hS_{k}}~\XLL~.
\end{myeq}
\noindent By Corollary \ref{cptcofseq} we have a long exact sequence in reduced
cohomology with coefficients in a \ww{\WH}-module $M$, with \emph{connecting homomorphism}:
\begin{myeq}[\label{eqconnhom}]
\tH{i}{\WH}{\Borp{(\bigvee_{H<L\in\hS_{k}}\XLL)}{\WH}}{M}~
\xra{\bar{\delta}^{\ast}_{H}}~\tH{i+1}{\WH}{\Borp{\XHH}{\WH}}{M}~.
\end{myeq}
\end{defn}

\begin{remark}\label{rchangegroup}
If $\Gamma$ acts on a (pointed) space $\bY$ as a
  subgroup of \w[,]{\Gamma'} we can replace \w{\bE\Gamma} by
  \w{\bE\Gamma'} (which is still a contractible space with a free
  $\Gamma$ action) in
  constructing \w[,]{\EX{\Gamma}{\bY}} \w[,]{\Bor{\bY}{\Gamma}}
   \w[,]{\EXp{\Gamma}{\bY}} and \w[.]{\Borp{\bY}{\Gamma}}
\end{remark}

\begin{mysubsection}{The change of groups map}\label{schangegp}
Given a $G$-CW complex $\bX$ and \w[,]{H\in\hF_{k+1}} we have a homeomorphism
of pointed spaces:
\begin{myeq}[\label{eqequivalence}]
\EXp{\WH}{(\bigvee_{H<L\in\hS_{k}}\XLL)}~=~
\bigvee_{H<L\in\hS_{k}}~\EXp{\WH}{\XLL}~.
\end{myeq}
\noindent The \ww{\WH}-action on the left-hand side transfers to the
right-hand side as follows: for each \w{L>H} in \w[,]{\hS_{k}} any
element \w{\bar{a}\in\WH} takes \w{\XLL} to \w{\bX^{L^{a}}_{L^{a}}} under left
multiplication by $a$. In particular, \w{\bar{a}\in\WLH\leq\WH} acts on
\w{\XLL=\bX^{L^{a}}_{L^{a}}} by automorphisms (via \w{\bWHL} \wwh
cf.\ \S \ref{dsubweyl}).

For any $G$-module $\uM$, we define the \emph{change of groups map}:
%
$$
\Phi^{L}_{H}:\map_{\WL}(\EXp{\WL}{\XLL},\,\bK(M_{L},n))\to
\map_{\WH}(\EXp{\WH}{(\bigvee_{H<L\in\hS_{k}}\XLL)},\,\bK(M_{H},n))
$$
%
\noindent to be the following composite:
\begin{myeq}\label{eqcomposite}
\begin{split}
&\map_{\WL}(\EXp{\WL}{\XLL},\bK(M_{L},n))~\xra{i}~
\map_{\bWHL}(\EXp{\WL}{\XLL},\bK(M_{L},n))~\xra{\simeq}\\
&~\map_{\bWHL}(\EXp{\bWHL}{\XLL},\bK(M_{L},n))~\xra{\pi^{\ast}}~
\map_{\WLH}(\EXp{\WLH}{\XLL},\bK(M_{L},n))~\xra{\mu_{\ast}}\\
&~\map_{\WLH}(\EXp{\WLH}{\XLL},\bK(M_{H},n))~\xra{\simeq}~
\map_{\WLH}(\EXp{\WH}{\XLL},\bK(M_{H},n))~\xra{\eta}\\
&~\map_{\WH}(\EXp{\WH}{(\bigvee_{H<L'\in\hS_{k}}\XLLp)},\,\bK(M_{H},n))~.
\end{split}
\end{myeq}
\noindent Here $i$ is an inclusion (since \w[),]{\bWHL\leq\WL} the
homotopy equivalences exist by Remark \ref{rchangegroup},
\w{\pi_{\ast}} is induced from \w{\pi:\WLH\to\bWHL} by functoriality of
\w[,]{\Gamma\mapsto\bE\Gamma} and \w{\mu:M_{L}\to M_{H}} is the
$\uM$-structure map.

The map $\eta$ is constructed by an ``averaging'' process, as follows:

First, note that by \wref[,]{eqequivalence} we have:
\begin{myeq}[\label{eqproduct}]
\map_{\WL}(\EXp{\WL}{(\bigvee_{L<H'\in\hS_{k}}\XHHp)},\bK(M_{L},n))
\subseteq\prod_{L<H'\in\hS_{k}}\map_{\WHLp}(\EXp{\WL}{\XHHp},\,\bK(M_{L},n))
\end{myeq}
\noindent so to define $\eta$ we must specify a
\ww{\WHLp}-equivariant map \w{f':\EXp{\WL}{\XHHp}\to \bK(M_{L},n)}
in each mapping space in the right hand side of \wref{eqproduct}
(and verify that the result is indeed \ww{\WL}-equivariant).

For any \w{a\in N_{G}H} and \ww{\WLH}-equivariant map
\w[,]{f\in\map_{\WLH}(\EXp{\WH}{\XLL},\bK(M_{H},n))} define:
$$
f':=f_{a}~\in\map_{\WH}(\EXp{\WH}{X^{L^{a}}_{L^{a}}},\,\bK(M_{H},n))
$$
\noindent (for \w{L':=L^{a}} to be \w[.]{f_{a}:=\mu_{a}\circ f\circ\co{L}{a}}
As in \S \ref{socat},
$$
(\cj{L}{a})^{\ast}:\EXp{\WH}{X^{L^{a}}_{L^{a}}}\to\EXp{\WH}{\XLL}
$$
\noindent is induced by \w{\cj{L}{a}:G/L\to G/L^{a}} and \w{\mu_{a}:M_{H}\to M_{H}}
is induced by the automorphism \w[.]{\cj{H}{a}:G/H\to G/H} It is
readily verified that \w{f_{a}} is \ww{W^{L^{a}}_{H}}-equivariant \wh
that is, that the following diagram commutes:
$$
\xymatrix{
\EXp{\WH}{X^{L^{a}}_{L^{a}}}\ar[r]^{(\cj{L}{a})^{\ast}}
\ar[d]_{\overline{b^{a}}_{\ast}=(\cj{L^{a}}{b^{a}})^{\ast}} &
\EXp{\WH}{\XLL}\ar[r]^{f}\ar[d]_{\overline{b}_{\ast}=(\cj{L}{b})^{\ast}}&
\bK(M_{H},n) \ar[r]^{\mu_{a}} \ar[d]^{\mu_{a}} &
\bK(M_{H},n) \ar[d]^{\mu_{b^{a}}} \\
\EXp{\WH}{X^{L^{a}}_{L^{a}}}\ar[r]_{(\cj{L}{a})^{\ast}} &
\EXp{\WH}{\XLL}\ar[r]_{f}&  \bK(M_{H},n) \ar[r]_{\mu_{a}} & \bK(M_{H},n)
}
$$
\noindent for each \w{b\in N_{G}H\cap N_{G}{L}} (so $b$
represents an automorphism \w{\overline{b}\in\WLH} and \w{b^{a}}
represents \w[).]{\overline{b^{a}}\in W^{L^{a}}_{H}}
Moreover, we can see that \w{f_{a}} coincides with $f$ when \w[.]{a\in N_{G}L}

If \w{L'\in\hS_{k}} which is \emph{not} conjugate to the give $L$ by
an element \w[,]{a\in N_{G}H} we set
\w{f'\in\map_{\WLHp}(\EXp{\WH}{\XLLp},\,\bK(M_{H},n))} equal to zero. The
resulting sequence \w{(f')_{H<L'\in\hS_{k}}} is in fact in the left
hand side of \wref{eqproduct} \wwh that is, it defines a
\ww{\WH}-equivariant map on
\w[,]{\EXp{\WH}{(\bigvee_{H<L'\in\hS_{k}}\XLLp)}} as required.
\end{mysubsection}

From Proposition \ref{ploccoh} we deduce:

\begin{corollary}\label{cchangegp}
Given a $G$-CW complex $X$, a $G$-module $\uM$, and \w[,]{H\in\hF_{k+1}}
we have a \emph{change of groups homomorphism}
$$
(\Phi^{L}_{H})_{\ast}:\tH{\ast}{\WL}{\Bor{\XLL}{\WL}}{M_{L}}\to
\tH{\ast}{\WH}{\Bor{(\bigvee_{H<L'\in\hS_{k}}\XLLp)}{\WH}}{M_{H}}~.
$$
\end{corollary}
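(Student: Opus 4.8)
The plan is to read off the corollary directly from Proposition~\ref{ploccoh} applied to the change-of-groups map $\Phi^{L}_{H}$ of \S\ref{schangegp}. First I would observe that the source and target of $\Phi^{L}_{H}$ are mapping spaces of the form $\map_{\WL}(\EXp{\WL}{\XLL},\bK(M_{L},n))$ and $\map_{\WH}(\EXp{\WH}{(\bigvee_{H<L'\in\hS_{k}}\XLLp)},\bK(M_{H},n))$, and that each of these carries a natural simplicial abelian group structure, so $\Phi^{L}_{H}$ (being a composite of maps each respecting or compatible with these structures) induces homomorphisms on all homotopy groups $\pi_{i}$.

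Second, I would invoke Proposition~\ref{ploccoh}: since $\XLL$ is a pointed $\WL$-CW complex with base point $x^{L}_{0}$ fixed by $\WL$ (Definition~\ref{dcofiber} and Fact~\ref{ffree}), and $M_{L}$ is a $\WL$-module, we have $\pi_{i}\map_{\WL}(\EXp{\WL}{\XLL},\bK(M_{L},n))\cong\tH{n-i}{\WL}{\Bor{\XLL}{\WL}}{M_{L}}$ for $0\leq i\leq n$; likewise $\bigvee_{H<L'\in\hS_{k}}\XLLp$ is a pointed $\WH$-CW complex (it is a wedge of pointed $\WH$-CW complexes by \wref{eqequivalence}, with the common base point fixed by $\WH$), so $\pi_{i}\map_{\WH}(\EXp{\WH}{(\bigvee\XLLp)},\bK(M_{H},n))\cong\tH{n-i}{\WH}{\Bor{(\bigvee_{H<L'\in\hS_{k}}\XLLp)}{\WH}}{M_{H}}$. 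Applying $\pi_{n-\ast}$ to $\Phi^{L}_{H}$ and transporting along these isomorphisms then yields the claimed homomorphism $(\Phi^{L}_{H})_{\ast}$ on reduced cohomology with local coefficients in degree $\ast$ (for $0\leq\ast\leq n$; letting $n$ vary gives it in all degrees).

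The one point that needs a little care — and the main (if modest) obstacle — is checking that the identifications of Proposition~\ref{ploccoh} are natural enough to be compatible with $\Phi^{L}_{H}$, which is not induced by a single map of spaces but is a zig-zag composite involving an inclusion of equivariant mapping spaces along $\bWHL\leq\WL$, the homotopy equivalences of Remark~\ref{rchangegroup} (replacing $\bE\bWHL$ by $\bE\WL$ etc.), the map $\pi^{\ast}$ coming from functoriality of $\Gamma\mapsto\bE\Gamma$, the coefficient map $\mu_{\ast}$, and the averaging map $\eta$. Here I would note that the isomorphism \wref{eqmapb} is natural in the pair $(\bX,M)$ and also behaves well under the change-of-groups operation of Remark~\ref{rchangegroup} (since \wref{eqborel} is natural and $\bE\Gamma\to\bE\Gamma'$ is a $\Gamma$-equivalence), so each stage of the composite \wref{eqcomposite} corresponds, under \wref{eqmapb}, to a well-defined homomorphism of reduced local cohomology groups; composing these gives $(\Phi^{L}_{H})_{\ast}$. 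Since no statement beyond the existence of the homomorphism is asserted, this suffices.
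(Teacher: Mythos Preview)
Your proposal is correct and matches the paper's approach exactly: the paper simply states ``From Proposition \ref{ploccoh} we deduce'' before the corollary, and your argument spells out precisely this deduction by applying $\pi_{n-\ast}$ to the map $\Phi^{L}_{H}$ of \S\ref{schangegp} and identifying source and target via \wref{eqmapb}. Your additional remarks on naturality with respect to the zig-zag \wref{eqcomposite} and Remark~\ref{rchangegroup} are more than the paper provides, but entirely appropriate.
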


%
%
\sect{The spectral sequences}
\label{csss}

Our goal is to understand how the Bredon cohomology of a $G$-CW complex $\bX$
can be computed in terms of \emph{local} information \wh that is, the
fixed point sets \w{X^{H}} for various subgroups \w[.]{H\leq G}

Conceptually, this local data should depend only on the Bredon
cohomology of \w{\bX^{H}} with respect to the action of the automorphism
group \w[.]{\WH} In practice, the local data is more delicate, since
it is expressed in terms of the reduced cohomology groups of
\w{\bX^{H}} with local coefficients. In fact, we have two different spectral sequences, which allow us to compute both Bredon cohomology groups (of $\bX$ itself, and of \w[)]{\bX^{H}} in terms of reduced cohomology with local coefficients.

First, we state our main result:

\begin{thm}\label{tss}
For any finite group $G$, $G$-CW complex $\bX$, and coefficient system
\w[,]{\uM:\OG\op\to\Abgp} there is a first quadrant spectral sequence
with:
$$
E_{1}^{k,i}~=~\bigoplus_{G/L\in\sk\cS_{k+i}}
\tH{i}{\WL}{\XWL}{\ML}~~\Longrightarrow~~\HB{i}{G}{\bX}{\uM}~.
$$
\noindent The differential \w{d_{1}:E_{1}^{k,i}\to E_{1}^{k,i+1}=
\bigoplus_{G/H\in\sk\cS_{k+i+1}}\tH{i+1}{\WH}{\XWH}{\MH}}
on each summand
is non-zero only if \w[,]{H<L} in which case its component
$$
\tH{i}{\WL}{\XWL}{\ML}~\to~\tH{i+1}{\WH}{\XWH}{\MH}
$$
\noindent is the composite of the connecting homomorphism
\w{\delta^{i}} of \wref{eqconnhom} with the change of groups
homomorphism of Corollary \ref{cchangegp}.
\end{thm}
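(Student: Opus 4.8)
The plan is to assemble the spectral sequence as the homotopy spectral sequence of the tower of fibrations \wref{eqtower} from \S\ref{sstrata}, applied to \w{\uX=\Phi\bX} and \w[,]{\uY=\uK(\uM,n)} and then to identify its terms. First I would invoke Remark \ref{rtower}: by Corollary \ref{ccofib} the tower \wref{eqtower} is a tower of fibrations of Kan complexes whose (homotopy) limit is \w[,]{M=\map_{\TOG}(\uX,\uK(\uM,n))} and by \wref{eqelmendorf} (via the Elmendorf equivalence) the homotopy groups of $M$ are exactly the Bredon cohomology groups \w[.]{\HB{n-\ast}{G}{\bX}{\uM}} The successive fibers are the spaces \w{F_{k}(\uX,\uY)} of Definition \ref{dfiber}, and by Lemma \ref{lloccoh} their homotopy groups are \w{\pi_{i}F_{k}(\uX,\uY)\cong\bigoplus_{G/H\in\sk\cS_{k}}\tH{n-i}{\WH}{\XWH}{\MH}} for \w[.]{0\leq i<n} This is the input for the standard homotopy spectral sequence of a tower of fibrations (cf.\ \cite{BKan}); reindexing (writing \w{i} for the cohomological degree \w{n-i} and shifting the filtration index so that the finite-dimensional, first-quadrant form emerges, using that the tower stabilizes at \w[)]{k=\lenG\{e\}} yields the stated \ww{E_{1}}-page and convergence to \w[.]{\HB{i}{G}{\bX}{\uM}}

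Next I would pin down the differential \w[.]{d_{1}} In the homotopy spectral sequence of a tower of fibrations, \w{d_{1}:\pi_{i}F_{k}\to\pi_{i-1}F_{k+1}} (in homotopy-group indexing) is the composite \w{\pi_{i}F_{k}\to\pi_{i}(\tau_{k}M)\xra{\partial}\pi_{i-1}F_{k+1}} of the map induced by \w{s_{k}^{\ast}:F_{k}\hra\map(\tau_{k}\uX,\tau_{k}\uY)} with the connecting map \w{\partial} of the fibration \wref[;]{eqfibseq} equivalently, one stage down, it is the boundary map of the cofiber sequence relating \w[,]{\uC_{k}} \w{\tau_{k}\uX} and \w[.]{\uC_{k+1}} So the concrete task is: for \w{G/L\in\sk\cS_{k+i}} and \w{G/H\in\sk\cS_{k+i+1}} (note \w{H} has strictly larger length, hence \w[),]{H<L\text{ possible only if }\ldots} to compute the \ww{(L,H)}-component of this boundary map in terms of the geometry of the modified fixed-point sets.

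The key geometric input is Definition \ref{dcofibseq}: for fixed \w{G/H\in\cS_{k+1}} the \ww{3\times 3} diagram \wref{eqthreebythree} exhibits, on the bottom row, a cofibration sequence of pointed \ww{\WH}-spaces \w[,]{\bigvee_{H<L\in\hS_{k}}\XLL\hra\bX^{H}/(\bigcup_{H<L\in\hF_{k}}\bX_{L})\epic\XHH} whose connecting map is \w{\bar{\delta}_{H}} of \wref[.]{eqconnmap} I would argue that, after passing to mapping spaces into \w[,]{\uY(G/H)=\bK(\MH,n)} this diagram is precisely what computes the attaching map of the \ww{(k+1)}-st cofiber \w{\uC_{k+1}(G/H)} onto the wedge of \w{\XLL}'s coming from the \ww{k}-th stratum — this is essentially the content of the colimit description of \w{\uC_{k}(G/H)} in the proof of Proposition \ref{pcofiber}, now read one filtration step at a time. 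Thus the \ww{(L,H)}-component of \w{d_{1}} factors as the connecting homomorphism \w{\bar{\delta}^{\ast}_{H}} of \wref{eqconnhom} (in its reduced, local-coefficient form, via Corollary \ref{cptcofseq} and Proposition \ref{ploccoh}) precomposed with the identification of \w{\tH{\ast}{\WL}{\XWL}{\ML}} with the \w{L}-summand of \w[;]{\tH{\ast}{\WH}{\Bor{(\bigvee_{H<L'\in\hS_{k}}\XLLp)}{\WH}}{\MH}} but that identification is exactly the change-of-groups map \w{(\Phi^{L}_{H})_{\ast}} of Corollary \ref{cchangegp}, built in \S\ref{schangegp} precisely to record how a \ww{\WL}-equivariant map on \w{\XLL} becomes a \ww{\WH}-equivariant map on the wedge (the ``averaging'' \w{\eta} handles conjugate copies of $L$, and one sets the non-conjugate components to zero). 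Composing gives \w[,]{d_{1}=\bar{\delta}^{\ast}_{H}\circ(\Phi^{L}_{H})_{\ast}} as claimed, and the first-quadrant/convergence assertions follow from exhaustiveness of \wref{eqfilter} for finite $G$.

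\textbf{Main obstacle.} The delicate point is the compatibility in the previous paragraph: checking that the purely homotopical connecting map of the tower of fibrations — extracted abstractly from \wref{eqfibseq} and the cofiber sequence defining \w{\uC_{k}} — agrees on the nose (not merely up to sign or up to unspecified automorphism) with the geometrically-defined connecting homomorphism \w{\bar{\delta}_{H}^{\ast}} composed with \w[.]{(\Phi^{L}_{H})_{\ast}} Both the \ww{\WH}-action bookkeeping in \S\ref{schangegp} (conjugation isomorphisms \w[,]{\cj{H}{a}} the averaging functor \w[,]{\eta} the maps \w[)]{\mu_{a}} and the fact that \w{\uC_{k+1}(G/H)}'s lower cells attach to the wedge of \w{\XLL} with \w{L\in\hS_{k}} exactly via the middle row/column of \wref{eqthreebythree} must be reconciled with the counit \w{\eta_{k}:\xi_{k}\tau_{k}\uX\to\tau_{k+1}\uX} and the cellular filtration used in Proposition \ref{pcofib}. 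I expect most of the real work to be a careful diagram chase verifying this identification summand-by-summand, using Fact \ref{funique} to reduce \ww{\cS_{k}}-diagram maps to single equivariant maps and Lemma \ref{lskeleton}/\S\ref{sgac} to track the \ww{\WH}-action through the skeleton \w[.]{\Lambda^{H}_{m}}
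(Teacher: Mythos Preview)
Your proposal is correct and follows essentially the same route as the paper: both build the spectral sequence as the homotopy spectral sequence of the tower \wref{eqtower} for \w{\uY=\uK(\uM,n)}, identify the fibers via Lemma \ref{lloccoh}, reindex by setting \w[,]{t=n-i} and then analyze \w{d_{1}} as the composite \w{\partial\circ s_{k}^{\ast}} by evaluating the cofiber-sequence connecting map at \w{G/H\in\cS_{k+1}} using the diagram \wref{eqthreebythree} and recognizing the passage from the \w{L}-summand to the \ww{\WH}-equivariant wedge as the change-of-groups map of \S\ref{schangegp}. The ``main obstacle'' you flag is exactly where the paper spends its effort, carrying out the summand-by-summand diagram chase you anticipate.
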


\begin{proof}
Fix \w{n>0} and let \w{\uY:=\uK(\uM,n)} be the (fibrant) diagram of
Eilenberg-Mac~Lane spaces in \w{\TOG} corresponding to $\uM$,
so that \w{\pi_{t}\map_{\TOG}(\uX,\uY)\cong\HB{n-t}{G}{\bX}{\uM}}
for any \w[.]{0\leq i\leq n} If \w[,]{N=\lenG\{e\}} then the tower
\wref{eqtower} is a tower of fibrations (by Corollary \ref{ccofib}),
with \w{\map(\uX,\uY)=\map(\tau_{N}\uX,\tau_{N}\uY)}
as its (homotopy) limit.

The usual homotopy spectral sequence for this tower of fibrations
(cf.\ \cite[IX, \S 4.2]{BKaH}) has:
$$
E_{1}^{k,t}~:=~\pi_{t}F_{k-t}(\uX,\uY)~\Rw~
\pi_{t}\map(\uX,\uY)~\cong~\HB{n-t}{G}{\bX}{\uM}~.
$$
\noindent By Lemma \ref{lloccoh} we have:
$$
\pi_{t}F_{k-t}~\cong~
\bigoplus_{G/L \in \sk\cS_{k-t}}\tH{n-t}{\WL}{\XWL}{\ML}~.
$$
\noindent Finally, set \w[,]{t:=n-i} and note that if
we replace $n$ by \w[,]{n-1} we simply apply the loop functor
\w{\Omega} to the diagram $\uY$, and thus to the tower of fibrations
\wref[.]{eqtower} Therefore, the spectral sequences for different $n$
all agree\vsm .

The differential \w{d_{1}^{k-i,i}:E_{1}^{k-i,i}\to E_{1}^{k-i+1,i+1}}
is induced from the long exact sequences in \w{\pi_{\ast}} of the
fibration sequences \wref{eqfibseq} for $k$ and \w{k+1} by the
composite:
\begin{myeq}\label{eqdifferential}
\begin{split}
\pi_{t}\map_{\TF{k}}(\uC_{k},\tau_{k}\uY)~&\xra{s_{k}^{\ast}}~
\pi_{t}\map_{\TF{k}}(\tau_{k}\uX,\tau_{k}\uY))~\cong~
\pi_{t}\map_{\Top^{\F_{k+1}}}(\xi_{k}\tau_{k}\uX,\tau_{k+1}\uY))\\
&~\xra{\partial_{t}}~
\pi_{t-1}\map_{\Top^{\F_{k+1}}}(\uC_{k+1},\tau_{k+1}\uY))~,
\end{split}
\end{myeq}
\noindent where the first group is:
\begin{myeq}\label{eqsource}
\bigoplus_{G/L\in\sk\cS_{k}}~\tH{i}{\WL}{\XWL}{\ML}
\end{myeq}
\noindent and the last group is:
\begin{myeq}\label{eqtarget}
\bigoplus_{G/H\in\sk\cS_{k+1}}~\tH{i+1}{\WH}{\XWH}{\MH}
\end{myeq}
\noindent both of which are \emph{finite} direct sums of abelian
groups.

To define a homomorphism into the direct sum (product), it suffices to
describe its component in each summand of \wref{eqtarget}
indexed by \w[:]{G/H\in\sk\cS_{k+1}}

First, note that the connecting homomorphism \w{\partial_{t}} in the
fibration sequence \wref{eqfibseq} for \w{k+1} is induced by the
connecting map \w{\delta_{k+1}} in the cofibration sequence:
%
$$
\xi_{k}\tau_{k}\uX~\xra{\eta_{k}}~\tau_{k+1}\uX~\xra{s_{k+1}}~\uC_{k+1}~
\xra{\delta_{k+1}}~\Sigma\xi_{k}\tau_{k}\uX
$$
%
\noindent in \w[.]{\Top^{\F_{k+1}}} Thus given a map of diagrams
\w[,]{\uf:\Sigma\xi_{k}\tau_{k}\uX\to\tau_{k+1}\uY} we need to
calculate the composite
\w[.]{\uf\circ\delta_{k+1}:\uC_{k+1}\to\tau_{k+1}\uY}
However, the diagram \w{\uC_{k+1}} is trivial except on
\w[,]{\cS_{k+1}} so this map is completely determined by
\w{(\uf\circ\delta_{k+1})(G/H)=\uf(G/H)\circ\delta_{k+1}(G/H)} for
\w[,]{G/H\in\cS_{k+1}} where \w{\delta_{k+1}(G/H)} is the connecting
map \w{{\delta}_{H}} in the middle horizontal cofibration sequence in
\wref[.]{eqthreebythree}

By \wref{eqadjoint} we have:
$$
\Sigma\xi_{k}\tau_{k}\uX(G/H)~=~\Sigma\colim\uX(G/L)~=~\colim\Sigma\uX(G/L)~,
$$
\noindent where the colimit, taken over all \w{G/L\to G/H} in \w{\OG\op} for
\w[,]{G/L\in\F_{k}} is \w[.]{\bigcup_{H<L\in\hF_{k}}~\Sigma X^{L}}
Therefore:
\begin{myeq}\label{equnionmap}
\uf(G/H)~=~\bigcup_{H<L\in\hF_{k}}~\uY(i_{\ast}\op)\circ\uf(G/L)
\end{myeq}
\noindent for \w{\uf(G/L):\Sigma\bX^{L}\to\bK(M_{L},n)} and
\w{\uY(\psi)} induced by the map \w{i_{\ast}\op:G/L\to G/H} in \w{\OG\op}
corresponding to the inclusion \w{i:H\hra L} (inducing the map denoted
by \w{\mu_{\ast}} in \wref[).]{eqcomposite}

The decomposition \wref{equnionmap} is clearly unaffected by the
isomorphism in \wref[,]{eqdifferential} and the map \w{s_{k}^{\ast}}
is induced by the quotient map \w{s_{k}} in the cofibration sequence:
%
$$
\xi_{k-1}\tau_{k-1}\uX~\xra{\eta_{k-1}}~\tau_{k}\uX~\xra{\us_{k}}~\uC_{k}
$$
%
\noindent in \w[.]{\TF{k}}

It might seem that we must evaluate \w{s_{k}} at all \w{G/L\in\F_{k}}
with \w[.]{H<L} However, in order to define the homomorphism out of
the direct sum (coproduct) \wref[,]{eqsource} it suffices to describe
its component on each summand, and since we are mapping into the
summand for \w[,]{G/H\in\sk\cS_{k+1}} we need only consider
subgroups \w{L>H} with \w[.]{G/L\in\sk\cS_{k}} and for these
we have: \w{\us_{k}(G/L)=q} (the lower left vertical quotient map in
\wref[).]{eqthreebythree}

We thus see that the composite \wref{eqdifferential} is induced by the
composite:
$$
\XHH~\xra{\delta_{H}}~\Sigma\bX_{H}=\bigcup_{H<L\in\hF_{k}}~\Sigma\bX^{L}~
~\xra{\Sigma q}~\bigvee_{H<L\in\hS_{k}}~\Sigma\XLLp
$$
which is just \w{\bar{\delta}_{H}} of \wref{eqconnmap} by naturality of the
connecting maps in the diagram of cofibration sequences
\wref[.]{eqthreebythree}

At this point we have shown that, given a \ww{\WH}-map
$$
g:\bigvee_{H<L\in\hS_{k}}\Sigma \XLL~\lora~\bK(M_{H},n)~,
$$
\noindent or its adjoint
$$
\tilde{g}:\bigvee_{H<L\in\hS_{k}}\XLL~\lora~\Omega
\bK(M_{H},n)~=~\bK(M_{H},n-1)~,
$$
\noindent we obtain a \ww{\WH}-map
\w[.]{g\circ(\Sigma q\circ\delta_{H}):\XHH\to \bK(M_{H},n)}
By Lemma \ref{lloccoh} this yields a homomorphism:
%
$$
\tH{i}{\WH}{\Bor{\left(\vee_{H<L\in\hS_{k}}\Sigma \XLL\right)}{\WH}}{M_{H}}~
\xra{(\Sigma q\circ\delta_{L})^{\ast}}~
\tH{i+1}{\WH}{\Bor{\XHH}{\WH}}{M_{H}}~.
$$

Now fix \w{L_{0}>H} in \w{\hS_{k}} \wwh that is, our representative in the
skeleton of \w{\EH{k}} (see Lemma \ref{lskeleton}) \wh and consider
the inclusion of the summand
\begin{equation*}
\begin{split}
\tH{i}{W_{L_{0}}}{\XWLz}{M_{L_{0}}}~&\xra{\iota_{L_{0}}}~
\bigoplus_{G/L\in\sk\cS_{k}}~\tH{i}{\WL}{\XWL}{\ML}\\
\cong&~\pi_{t}\map_{\TF{k}}(\uC_{k},\tau_{k}\uY)
\end{split}
\end{equation*}
\noindent in the direct sum \wref[.]{eqsource}

The composite \w{s_{k}^{\ast}\circ\iota_{L_{0}}} into
\w{\pi_{t}\map_{\TF{k}}(\tau_{k}\uX,\tau_{k}\uY)} is
induced by the equivalence of categories
\w[,]{\tau:\F_{k}/(G/H)\to\sk\F_{k}/(G/H)} which extends
a \ww{W_{L_{0}}}-map \w{g:\XLLz\to\uY(G/L_{0})=\bK(M_{L_{0}},i)}
(representing \w[)]{[g]\in\tH{i}{W_{L_{0}}}{\XWLz}{M_{L_{0}}}} to
all of \w{\TF{k}} precisely by the averaging map $\eta$
in \wref[.]{eqcomposite}
\end{proof}

\begin{mysubsection}{The categories \w{\EH{m}} with group action}\label{sehga}
As noted in \S \ref{sgac}, for a fixed \w[,]{G/H\in\cS_{k}} the group
\w{\WH=\Aut(G/H)} acts on the categories \w[;]{\EH{m}} we can
incorporate this action into each \w{\EH{m}} to obtain a new category
\w{\tEH{m}} defined as follows:

First, we define a free category $\C$ with objects
\w[:]{\Obj(\EH{m})} that is, \w{\psi\op:G/K\to G/H} in \w{\OG\op}
with \w[.]{G/K\in\F_{m}} The maps of $\C$ will be generated by those
of \w[,]{\EH{m}} together with a new map \w{\cp{K}{a}:i^{\ast}\to(i^{a})^{\ast}}
for any \w{a\in N_{G}H} and \w{i^{\ast}:G/K\to G/H} in
\w{\LH{m}\subseteq\Obj(\EH{m})} (cf.\ \S \ref{sgac}).

The forgetful functor \w{U_{m}:\EH{m}\to\OG\op} (sending
\w{\psi\op:G/K\to G/H} to \w[)]{G/K} extends to \w{\tU{m}:\C\to\OG\op}
in the obvious way, with
$$
\tU{m}(\cp{K}{a})~=~\co{K}{a}:G/K\to G/K^{a}~.
$$
\noindent We define \w{\tEH{m}} to be the quotient category of $\C$, with
the same object set, in which two morphisms $f$ and $g$ are
identified if and only if \w[.]{\tU{m}f=\tU{m}g} Thus \w{\EH{m}}
embeds faithfully (but not fully) in \w[.]{\tEH{m}}

This implies that if \w[,]{a\in H\leq K} then \w{\co{K}{a}} is the
identity, so \w{\tEH{m}} encodes the action of \w{\WH =N_{G}H/H} on
\w{\EH{m}} (see \S \ref{sgac}). In particular,
\w{\cp{K}{a}:i^{\ast}\to(i^{a})^{\ast}} will be identified
with \w{\cp{K}{b}:i^{\ast}\to(i^{b})^{\ast}} if
$$
\co{K}{a}=\co{K}{b}:G/K\to G/K^{a}=G/K^{b}~,
$$
\noindent that is, if \w[.]{b^{-1}a\in K} In particular,
\w{N_{G}H\cap N_{G}K} acts by
automorphisms on \w[,]{i^{\ast}:G/K\to G/H} and
$$
\Aut_{\tEH{m}}(i^{\ast})~\cong~(N_{G}H\cap N_{G}K)/(N_{G}H\cap K)~=~
\bWHK~.
$$

The filtration \wref{eqfilter} induces a filtration:
\begin{myeq}[\label{eqfilterd}]
\tEH{0}~\subset~\tEH{1}~\subset~\dotsc~\tEH{m}\subset~\dotsc~\subset~\tEH{k}
\end{myeq}
\noindent by full subcategories. We let \w{\tSH{m}} be the full subcategory
of \w{\tEH{m}} with object set \w[.]{\tEH{m}\setminus\tEH{m-1}}
\end{mysubsection}

\begin{defn}\label{dfiltdiag}
Given a $G$-space $\bX$, \w[,]{\uM:\OG\op\to\Abgp} \w[,]{n\geq 0} and a
fixed subgroup \w[,]{H\leq G} let \w{\bY:=\bK(\uM,n)^{H}} (an
\ww{\WH}-Eilenberg-Mac~Lane space with \w[).]{\pi_{n}\bY=M_{H}:=\uM(G/H)}

We define two diagrams \w{\hX,\hY:\tEH{k}\to\Top} by:
\begin{enumerate}
\renewcommand{\labelenumi}{(\alph{enumi})\ }
\item \w[,]{\hX:=\uX\circ\tU{k}} so \w{\hX(G/K):=\bX^{K}} with
\w{\hX(j^{\ast})} the inclusion \w{\bX^{L}\hra\bX^{K}}
for \w{j:L\hra K} and \w{\hX(\co{K}{a}):\bX^{K}\to\bX^{K^{a}}} the action of
\w[.]{N_{G}H}
\item \w[,]{\hY(G/K):=\bY^{N_{G}H\cap K}} with
\w{\hY(j^{\ast})} the inclusion, and
$$
\hY(\co{H}{a}):\bY^{N_{G}H\cap K}~\lora~\bY^{N_{G}H\cap K^{a}}
$$
\noindent  again the given \ww{N_{G}H}-action.
\end{enumerate}
\end{defn}

\begin{lemma}\label{lmapspace}
If $\bX$ and $\bY$ as above, there is a natural isomorphism
$$
\map_{\TE{k}}(\hX,\hY)~\cong~\map_{\WH}(\bX^{H},\bY)~.
$$
\end{lemma}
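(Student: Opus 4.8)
The plan is to exhibit an explicit bijection between the two mapping spaces at the level of simplicial sets, natural in both variables, by unwinding the definition of $\tEH{k}$ and using the fact that every object of $\EH{k}$ is uniquely isomorphic to one in the skeleton $\LH{k}$. First I would use Lemma \ref{lskeleton} to reduce $\tEH{k}$ to the full subcategory on objects $i^{\ast}:G/K\to G/H$ with $H\leq K\in\hF_{k}$ (this is harmless since a map of diagrams is determined by its values on any skeleton, and the extra morphisms $\cp{K}{a}$ only permute objects within isomorphism classes). On this skeleton, a map $\uf:\hX\to\hY$ of $\TE{k}$-diagrams assigns to each $i^{\ast}:G/K\to G/H$ a map $\uf_{K}:\bX^{K}\to\bY^{N_{G}H\cap K}\subseteq\bY$, compatible with (i) the inclusions $\hX(j^{\ast})$, $\hY(j^{\ast})$ for $H\leq L\leq K$, and (ii) the new automorphisms $\cp{K}{a}$ for $a\in N_{G}H$, which act on $\hX(i^{\ast})$ by $\hX(\co{K}{a})=$ (multiplication by $a$)$:\bX^{K}\to\bX^{K^{a}}$ and on $\hY$ by the given $N_{G}H$-action on $\bY$.

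Next I would observe that the object $\Id^{\ast}:G/H\to G/H$ (i.e.\ $K=H$) is \emph{initial} among the skeletal objects under the inclusion morphisms $j^{\ast}$, since $H\leq K$ for every $K$ in the indexing poset $\LH{k}$. Hence the whole diagram map $\uf$ is determined by its value $\uf_{H}:\bX^{H}\to\bY^{N_{G}H\cap H}=\bY^{H}=\bY$ together with the compatibility conditions: the value at any $K$ must be the restriction $\uf_{H}\rest{\bX^{K}}$ (composing with the inclusion $\bX^{K}\hra\bX^{H}$ and landing automatically in $\bY^{N_{G}H\cap K}$ by equivariance of $\uf_{H}$). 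This is the content of Fact \ref{funique}--style rigidity, here applied to the inclusion maps. So the forgetful assignment $\uf\mapsto \uf_{H}$ gives a well-defined map from $\map_{\TE{k}}(\hX,\hY)$ into $\map(\bX^{H},\bY)$; I then check that $\uf_{H}$ is $\WH$-equivariant precisely because the morphisms $\cp{K}{a}$ with $K=H$ give, for each $a\in N_{G}H$, the relation $\uf_{H}\circ(a\cdot)=(a\cdot)\circ\uf_{H}$, and $a\in H$ acts trivially, so this descends to an action of $\WH=N_{G}H/H$. Conversely, any $\WH$-map $g:\bX^{H}\to\bY$ defines a diagram map by $\uf_{K}:=g\rest{\bX^{K}}$ (landing in $\bY^{N_{G}H\cap K}$ because $g$ is $N_{G}H$-equivariant and $x\in\bX^{K}$ is fixed by $N_{G}H\cap K$, so $g(x)$ is too), and one verifies compatibility with the $\co{K}{a}$'s from equivariance of $g$. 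These two constructions are mutually inverse, and the whole argument is simplicial-degreewise (replacing $\bX^{H}$ by $\bX^{H}\times\Delta[n]$ etc.), giving the isomorphism of simplicial sets; naturality in $\hX$ and $\hY$ is immediate from the formulas.

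The main obstacle I anticipate is the bookkeeping around the target: one must check carefully that the fixed-point conditions $\bY^{N_{G}H\cap K}$ in the definition of $\hY$ are exactly what is forced — neither too restrictive (so that some $\WH$-map $g$ fails to define a diagram map) nor too lax (so that the inverse construction is not injective). The key point is that for $x\in\bX^{K}$, the stabilizer of $x$ in $N_{G}H$ contains $N_{G}H\cap K$, so a $\WH$-equivariant $g$ automatically sends $x$ into $\bY^{N_{G}H\cap K}$; and conversely the value at the initial object $K=H$ recovers all of $g$. A secondary subtlety is confirming that the relations imposed in passing from the free category $\C$ to $\tEH{k}$ (identifying $f,g$ when $\tU{k}f=\tU{k}g$) do not over-identify — but since $\hX$ and $\hY$ both factor through diagrams pulled back along $\tU{k}$-type data, any map respecting $\EH{k}$ and the $N_{G}H$-action automatically respects these relations. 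Once these checks are in place, the isomorphism follows formally.
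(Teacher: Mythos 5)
Your proposal follows essentially the same route as the paper's proof: evaluate a diagram map at the object $G/H$ to get a $\WH$-equivariant map $\bX^{H}\to\bY$, and conversely define a diagram map from a $\WH$-map $f$ by restricting $f$ to each $\bX^{K}$, observing that the image lands in $\bY^{N_{G}H\cap K}$ because $f$ is $N_{G}H$-equivariant and points of $\bX^{K}$ are fixed by $N_{G}H\cap K$. The only slip is a terminological one: the object $\Id^{\ast}:G/H\to G/H$ (i.e.\ $K=H$) is the \emph{terminal} object of $\EH{k}$ (as the paper notes in \S\ref{sstrata}), not the initial one -- in $\LH{k}$ (the opposite of the poset of overgroups of $H$), every $K$ admits a unique map to $H$, none from $H$. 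Your determination argument still goes through because the structure maps $\hX(K\to H):\bX^{K}\hra\bX^{H}$ and $\hY(K\to H):\bY^{N_{G}H\cap K}\hra\bY$ are both injective inclusions, so the commuting square with the terminal object pins down $\uf_{K}$ from $\uf_{H}$; but you should call this object terminal, and the correct reason it determines everything is the injectivity of $\hY(K\to H)$, not an initiality property.
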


\begin{proof}
Note that for any diagram \w[,]{\hZ:\tEH{k}\to\Top} \w{\hZ(G/H)} has a
\ww{\WH}-action, and the restriction \w{\uZ:\EH{k}\to\Top} of $\hZ$ is
\ww{\WH}-equivariant (with respect to the \ww{\WH}-action of \S \ref{sgac}).

Since \w{\hX(G/H)=\bX^{H}} and \w[,]{\hY(G/H)=\bY} we have a projection
$$
\map_{\TE{k}}(\hX,\hY)~\to~\map(\bX^{H},\bY)
$$
\noindent which lands in \w{\map_{\WH}(\bX^{H},\bY)} (because of the
equivariance).

On the other hand, given a \ww{\WH}-map \w{f:\bX^{H}\to\bY} and
\w[,]{i:H\hra K} because \w[,]{N_{G}H\cap K\leq K} we have an
inclusion \w[.]{j:\bX^{K}\hra\bX^{N_{G}H\cap K}} Moreover,
\w{f\rest{\bX^{N_{G}H\cap K}}} lands in \w{\bY^{N_{G}H\cap K}=\hY(G/K)}
because any \ww{\WH}-map is an \ww{N_{G}H}-map. Therefore, we can
define a map of \ww{\tEH{m}}-diagrams \w{\hf:\hX\to\hY} by setting
$$
\hf(G/K):\hX(G/K)\to\hY(G/K)
$$
\noindent to be
$$
f\rest{\bX^{K}}~=~(f\rest{\bX^{N_{G}H\cap K}})\circ j:\bX^{K}~\to~
\bY^{N_{G}H\cap K}~,
$$
\noindent and noting that all the groups acting in \w{\tEH{m}} do so via the
\ww{N_{G}H}-action.
\end{proof}

\begin{defn}\label{dskelehm}
Given an inclusion \w[,]{\ell:H\hra L} let \w{\lra{\ell^{\ast}}} denote the
collection of all distinct conjugates
\w{(\ell^{a})^{\ast}:G/L^{a}\to G/H^{a}=G/H} of \w{\ell^{\ast}:G/L\to G/H} by
elements \w[.]{a\in N_{G}H} This class contains
\w{[N_{G}H:N_{G}H\cap N_{G}L]} distinct elements of \w[.]{\OG\op/(G/H)}
When \w[,]{L\in\hF_{m}} \w{\lra{\ell^{\ast}}} is an isomorphism class of
elements in \w[,]{\tEH{m}} so by choosing one representative
\w{\ell^{\ast}:G/L\to G/H} in each such class \w[,]{\lra{\ell^{\ast}}} we
obtain a skeleton \w{\sk\tEH{m}} for \w[.]{\tEH{m}}

Note that any object \w{\ell^{\ast}:G/L\to G/H} in the skeleton
(with \w{\ell:H\hra L} an inclusion) is determined by
the object \w{G/L} in \w[,]{\OG\op} so by abuse of notation we
denote it simply by \w[.]{G/L}
\end{defn}

\begin{prop}\label{pcellular}
If \w{\uX:\OG\op\to\Top} is cellular, then for each \w{H\in\hS_{k}}
and \w{m\leq k} the restriction
\w{\uX\circ\tU{m}:\tEH{m}\to\Top} is cellular, too.
\end{prop}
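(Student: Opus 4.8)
The plan is to exploit that the restriction functor $\tU{m}^{\ast}\colon\TOG\to\TE{m}$, $\uZ\mapsto\uZ\circ\tU{m}$, preserves all small colimits, since colimits in both diagram categories are formed objectwise. Thus, presenting the cellular diagram $\uX$ as $\uX=\colim_{p}\uX^{p}$ with $\uX^{p}=\uX^{p-1}\cup_{(f_{\alpha})}\coprod_{\alpha\in I_{p}}\uD^{n_{\alpha}}_{G/K_{\alpha}}$ as in (\ref{eqattach}), we get $\uX\circ\tU{m}=\colim_{p}(\uX^{p}\circ\tU{m})$, where $\uX^{p}\circ\tU{m}$ is the pushout of
$$
\uX^{p-1}\circ\tU{m}~\longleftarrow~\coprod_{\alpha}\bigl(\uS^{n_{\alpha}-1}_{G/K_{\alpha}}\circ\tU{m}\bigr)~\hookrightarrow~\coprod_{\alpha}\bigl(\uD^{n_{\alpha}}_{G/K_{\alpha}}\circ\tU{m}\bigr).
$$
Hence it is enough to show that $\tU{m}^{\ast}$ sends each cell $\uD^{n}_{G/T}$, together with its boundary $\uS^{n-1}_{G/T}$, to a coproduct of cells (respectively, their boundaries) in $\TE{m}$; then $\uX\circ\tU{m}$ is assembled by attaching cells, hence cellular. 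This parallels the proof of Proposition \ref{pcofib}.

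Since $\uD^{n}_{G/T}=\bD{n}\otimes\uD^{0}_{G/T}$ and $\uS^{n-1}_{G/T}=\bS{n-1}\otimes\uD^{0}_{G/T}$ --- where $\uD^{0}_{G/T}=\Phi(G/T)$ is the free point-diagram, corepresenting evaluation at $G/T$ by Fact \ref{funique} --- and tensoring with a fixed space commutes with $\tU{m}^{\ast}$, the above reduces to showing that the functor $\uD^{0}_{G/T}\circ\tU{m}\colon\tEH{m}\to\Top$ is a coproduct of representable functors $\tEH{m}(\tau_{j},-)=\uD^{0}_{\tau_{j}}$. A $\Set$-valued functor on $\tEH{m}$ is a coproduct of representables precisely when every connected component of its category of elements has an initial object, so I would analyze the category of elements $\mathcal{E}$ of $\uD^{0}_{G/T}\circ\tU{m}$. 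Using Lemma \ref{lskeleton}, an object of $\mathcal{E}$ is, up to isomorphism, a subgroup $L$ with $H\le L\in\hF_{m}$ together with a morphism $\theta\colon G/L\to G/T$ in $\OG$; a morphism of $\mathcal{E}$ lies over a morphism of $\tEH{m}$ and is compatible with the $\theta$'s.

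This is where the analysis of \S\ref{socat} and the description of $\tEH{m}$ in \S\ref{sehga} enter. By the epimorphism--isomorphism factorization of maps in $\OG$, the existence of $\theta\colon G/L\to G/T$ forces $L$ to be subconjugate to $T$, hence $\lenG T\le\lenG L\le m$; so $\mathcal{E}$ is empty, and $\uD^{0}_{G/T}\circ\tU{m}$ is the empty diagram, whenever $\lenG T>m$. When $\lenG T\le m$, factoring the composite $G/H\epic G/L\xra{\theta}G/T$ as an epimorphism $G/H\epic G/T'$ onto a conjugate $T'$ of $T$ (necessarily with $H\le T'$ and $\lenG T'=\lenG T\le m$) followed by an isomorphism exhibits the object $\tau:=(i^{\ast}\colon G/T'\to G/H)$ of $\EH{m}$ --- for $i\colon H\hookrightarrow T'$ the inclusion --- together with a morphism $\tau\to\sigma$ in $\mathcal{E}$. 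I expect $\tau$ to be the initial object of its component: uniqueness of the factorization (Fact \ref{fcommute}), the fact that morphisms in $\tEH{m}$ are determined by their image under $\tU{m}$, and the identification of the objects built from the distinct conjugates of $T'$ by elements of $N_{G}H$ (via the maps $\cp{T'}{a}$) should together pin down a unique morphism from $\tau$ to every object of its component. Letting the relevant conjugate run over $N_{G}H$-orbits, the components of $\mathcal{E}$ get indexed by $\OG(G/H,G/T)/\WH$, giving $\uD^{0}_{G/T}\circ\tU{m}\cong\coprod_{[\theta]}\uD^{0}_{\tau_{[\theta]}}$, with the occurring cells dictated by Lemma \ref{lskeleton}.

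The step I expect to be the genuine obstacle is the last one: verifying, against the relations defining $\tEH{m}$ (two morphisms of the underlying free category agree iff they have the same image under $\tU{m}$, and $\cp{T'}{a}$ is the identity when $a\in T'$), that $\tau_{[\theta]}$ is truly \emph{initial} in its component --- equivalently, that any zig-zag in $\mathcal{E}$ between two ``maximal'' objects forces their subgroups to be conjugate by an element of $N_{G}H$, so that the conjugation maps collapse them to one object rather than leaving a weakly initial family. Once that bookkeeping is done, the paragraphs above combine into the proof; one should keep separate track of the regimes $m=\lenG H$, where $H$ itself (and the representable at $\Id\colon G/H\to G/H$) may occur, and $m<\lenG H$, where every relevant $L$ satisfies $H\subsetneq L$.
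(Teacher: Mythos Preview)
Your strategy coincides with the paper's: both reduce to showing that the restriction of a single cell \w{\uD^{n}_{G/T}\circ\tU{m}} is a coproduct of cells in \w[,]{\TE{m}} and then conclude by induction on the cellular skeleta via \wref{eqattach} and Fact \ref{funique}.  The difference is in how that key step is carried out.  The paper does a direct combinatorial match: it introduces the sets \w{\cV^{T}_{K}} and \w{\cP^{T}_{H}} and the partition \wref[,]{eqpartition} counts \w{\Hom_{\OG\op}(G/T,G/K)\cong\cV^{T}_{K}\times W_{T}} and compares with the hom-sets of \w[,]{\tEH{m}} arriving at the explicit decomposition
\[
\uD^{n}_{G/T}\circ\tU{m}~\cong~\coprod_{\ell^{\ast}\in\cP^{T}_{H}}~\coprod_{W_{T}/\bWHL[\,T\,]}~\hD^{n}_{\ell^{\ast}}~.
\]
Your category-of-elements formulation is the categorical repackaging of exactly this count; the ``genuine obstacle'' you flag \wh verifying that each component of $\mathcal{E}$ has an initial object \wh is precisely the bookkeeping the paper carries out by hand.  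Your proposed indexing by \w{\OG(G/H,G/T)/\WH} is correct and agrees with the paper's formula: the \ww{\WH}-stabilizer of a conjugate \w{T'\supseteq H} acts on the torsor of isomorphisms \w{G/T'\cong G/T} freely through \w[,]{\bar{W}^{H}_{T'}} yielding \w{|W_{T}|/|\bar{W}^{H}_{T'}|} orbits over each element of \w[.]{\cP^{T}_{H}}  One small correction: Lemma \ref{lskeleton} gives a skeleton of \w[,]{\EH{m}} but for \w{\tEH{m}} you need the finer skeleton of Definition \ref{dskelehm}, which already absorbs the \ww{N_{G}H}-conjugations \w[;]{\cp{K}{a}} using it from the start would streamline your identification of the initial objects.
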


\begin{proof}
For any \w{L\in\hF_{m}} and inclusion \w[,]{i:H\hra K} let
$$
\cV^{L}_{K}~:=~\{(j\circ i)^{\ast}:G/L^{a}\to G/H~:\ a\in G~
\text{and}~j:K\hra L^{a}~~\text{is an inclusion}\}~\subseteq~\tEH{m}~.
$$
\noindent Note that each object in \w{\cV^{L}_{K}} is determined by
the conjugate \w{L^{a}} (containing $K$).

In particular, if \w[,]{K=H} we let
\w[.]{\cP^{L}_{H}:=\cV^{L}_{H}\cap\sk\tEH{m}} Moreover, given
\w[,]{\cP^{L}_{H}} we can generate all of \w{\cV^{L}_{H}} by
conjugating each \w{j=j\circ\Id_{H}} by all possible elements
\w[,]{a\in N_{G}H} so
\w[.]{\cV^{L}_{H}=\coprod_{j^{\ast}\in\cP^{L}_{H}}~\lra{j^{\ast}}}
Since \w[,]{\cV^{L}_{K}\subseteq\cV^{L}_{H}} this yields a partition:
\begin{myeq}[\label{eqpartition}]
\cV^{L}_{K}~=~\coprod_{(j\circ i)^{\ast}:G/L^{a}\to G/H \in\cP^{L}_{H}}~~
\lra{(j\circ i)^{\ast}}~\cap~\cV^{L}_{K}
\end{myeq}
\noindent for any \w[.]{H\leq K\leq G}

Now consider a cell \w{\uD^{n}_{G/L}} in \w{\TOG} (\S
\ref{scellular}). By definition, for each \w[,]{G/K\in\OG\op}
$$
\uD^{n}_{G/L}(G/K)=\coprod_{\psi\op:G/L\to
  G/K~\text{in}~\OG\op}~\bD{n}_{\psi}
$$
\noindent (see \wref[).]{eqncell} However, every
map \w{\psi\op:G/L\to G/K} is determined by choosing a conjugate
\w{L^{a}} of $L$ containing $K$, with inclusion \w[,]{j:K\hra L^{a}}
and then \w{\psi\op=j^{\ast}\circ\co{L}{a}} (see \S \ref{socat}).
Since \w{\co{L}{a}} is an isomorphism, the possible choices of such
isomorphisms \w{G/L\to G/L^{a}} (for fixed \w[)]{L^{a}\in\cV^{L}_{K}}
is in one-to-one correspondence with \w[.]{\Aut(G/L)=\WL}
Thus we have a set bijection between \w{\Hom_{\OG\op}(G/L,G/K)} and
\w[,]{\cV^{L}_{K}\times\WL} so:
%
$$
\uD^{n}_{G/L}(G/K)~=~\coprod_{\cV^{L}_{K}}~~\coprod_{\WL}~~\bD{n}
~=~\coprod_{(j\circ i)^{\ast}\in\cP^{L}_{H}}~
\coprod_{\lra{(j\circ i)^{\ast}}~\cap~\cV^{L}_{K}}~
\coprod_{\WL/\bWHL}~\coprod_{\bWHL}~~\bD{n}~,
$$
%
\noindent where the second equality follows from \wref{eqpartition}
and Definition \ref{dsubweyl}.

On the other hand, let \w{\hD^{n}_{\vartheta\op}} be an $n$-cell in
\w[,]{\TE{m}} for some \w{\vartheta\op:G/L\to G/H} in
\w[.]{\tEH{m}} Given any \w{\zeta\op: G/K\to G/H} in \w[,]{\tEH{m}}
we have:
$$
\hD^{n}_{\vartheta\op}(\zeta\op)~=~\coprod_{\tilde{\psi}\op:\vartheta\op\to
\zeta\op~\text{in}~\tEH{m}}~\bD{n}_{\tilde{\psi}}~.
$$
\noindent We may assume for simplicity that \w{\vartheta\op} and
\w{\zeta\op} are in the skeleton \wh that is, they are induced by
inclusions: \w{\vartheta\op=\ell^{\ast}:G/L\to G/H} and
\w[.]{\zeta\op=i^{\ast}: G/K\to G/H}
Because \w{\tU{m}:\tEH{m}\to\OG\op} is faithful, the
possible maps \w{\tilde{\psi}\op} are determined by
\w{\psi\op:=\tU{m}(\tilde{\psi}\op):G/L\to G/K} in \w[,]{\OG\op}
that is, by the choice of \w{a\in N_{G}H} and
\w{(j\circ i)^{\ast}\in\lra{\ell^{\ast}}\cap\cV_{K}^{L}} (for
\w[),]{j:K\hra L^{a}} and an isomorphism \w{G/L\to G/L^{a}} in
\w[.]{\tEH{m}} Therefore:
%
$$
\hD^{n}_{\ell^{\ast}}(i^{\ast})~=~\coprod_{\lra{\ell^{\ast}}\cap\cV_{K}^{L}}~
\coprod_{\bWHL}~\bD{n}~.
$$

We conclude that for any cell \w{\uD^{n}_{G/L}} in \w[,]{\TOG}
its restriction \w{\uD^{n}_{G/L}\circ\tU{m}} to
\w{\TE{m}} is itself a coproduct of cells, viz.:
$$
\uD^{n}_{G/L}\circ\tU{m}~=~
\coprod_{(j\circ i)^{\ast}\in\cP^{L}_{H}}~\coprod_{\WL/\bWHL}~~
\hD^{n}_{(j\circ i)^{\ast}}~.
$$
\noindent The cellularity of \w{\uX\circ\tU{m}} now follows by
induction from \wref{eqattach} and Fact \ref{funique}.
\end{proof}

\begin{remark}\label{rwhsystem}
Note that for \w{L>H} we have \w[,]{N_{G}H\cap N_{G}L\subseteq N_{G}(N_{G}H\cap L)}
so we have an inclusion
%
$$
(N_{G}H\cap N_{G}L)/(N_{G}H\cap L)~=~\bWHL~\hra~W_{N_{G}H\cap L}~.
$$
%
\noindent Therefore, \w{\bWHL} acts on \w{\uM(G/(N_{G}H\cap L))} for
any \ww{\OG\op}-diagram $\uM$.

Furthermore, there is a one-to-one correspondence between the lattice
of groups $L$ with \w{H\leq L\leq N_{G}H} and the lattice of subgroups
\w{\bar{L}:=L/H} of \w[,]{\WH=N_{G}H/H} which induces an inclusion of
categories \w{I:\OWH\op\hra\OG\op} Therefore, given a diagram
\w[,]{\uM:\OG\op\to\Abgp} we can define a coefficient system
\w[,]{\hM=\uM\circ I:\OWH\op\to\Abgp} with \w[.]{\hM(\WH/\bar{L}):=\uM(G/L)}

Moreover, if \w{\bY:=\bK(\uM,n)^{H}} as above, then
\w{\bY^{L}\simeq \bK(\uM(G/L),n)} for any \w[,]{h\leq L\leq N_{G}H} so
$\bY$ is a \ww{\WH}-space of type \w{\bK(\hM,n)} (see \S
\ref{sbredon}), and thus
$$
\HB{i}{\WH}{\bX^{H}}{\hM}~\cong~\pi_{n-i}\map_{\WH}(\bX^{H},\bY)
$$
\noindent for any $G$-space $\bX$ by \cite[II, \S 1]{MayEH}.
\end{remark}

\begin{thm}\label{tssfps}
For any finite group $G$, $G$-CW complex $\bX$, and coefficient system
\w[,]{\uM:\OG\op\to\Abgp} for each subgroup \w{H\leq G} there is a
first quadrant spectral sequence with:
$$
E_{1}^{m,i}~=~\bigoplus_{G/L\in\sk\tSH{m+i}}
\tH{i}{\bWHL}{\bE\WL\times_{\bWHL}\XLL}{M_{N_{G}H\cap L}}~~
\Longrightarrow~~\HB{i}{\WH}{\bX^{H}}{\hM}~.
$$
\end{thm}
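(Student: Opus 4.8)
The plan is to follow the proof of Theorem~\ref{tss} almost verbatim, the only substantive changes being that the indexing category $\OG\op$ with its filtration \wref{eqfilter} is replaced by the category $\tEH{k}$ (with $k=\lenG H$) and its finite filtration \wref[,]{eqfilterd} and that the relevant cohomology of $\bX^{H}$ is first reinterpreted as a mapping space of $\tEH{k}$-diagrams. Concretely, fix $n>0$ and set $\bY:=\bK(\uM,n)^{H}$; by Remark~\ref{rwhsystem} this is a $\WH$-Eilenberg--Mac~Lane space of type $\bK(\hM,n)$, so $\HB{n-i}{\WH}{\bX^{H}}{\hM}\cong\pi_{i}\map_{\WH}(\bX^{H},\bY)$, and by Lemma~\ref{lmapspace} the right-hand side is naturally isomorphic to $\map_{\TE{k}}(\hX,\hY)$, where $\hX=\uX\circ\tU{k}$ and $\hY$ are the diagrams of Definition~\ref{dfiltdiag}. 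By Proposition~\ref{pcellular} the diagram $\hX$ is cellular, hence cofibrant, in $\TE{k}$, while $\hY$ is (objectwise) fibrant.

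Next I would run the construction of \S \ref{cfoc} for the filtration \wref{eqfilterd} of $\tEH{k}$: restriction along $\tEH{m}\hra\tEH{k}$ produces a finite tower of pointed simplicial sets whose top term, and hence limit, is $\map_{\TE{k}}(\hX,\hY)$. The key point is that Proposition~\ref{pcofib} and Corollaries~\ref{ccofb} and~\ref{ccofib} remain valid with $\tEH{k}$ in place of $\OG\op$: the proof of Proposition~\ref{pcofib} used only the cellularity of the diagram --- now supplied by Proposition~\ref{pcellular} --- together with Fact~\ref{funique} and the fact that the left adjoint $\xi_{m-1}$ to restriction along $\tEH{m-1}\hra\tEH{m}$ is computed by a colimit over a comma category that is terminal over every object already lying in $\tEH{m-1}$. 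Granting this, the tower is a tower of fibrations of Kan complexes, so its homotopy spectral sequence (\cite[IX, \S 4.2]{BKaH}) converges to $\pi_{\ast}\map_{\TE{k}}(\hX,\hY)\cong\HB{\ast}{\WH}{\bX^{H}}{\hM}$.

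The main step is the identification of the successive fibers $F_{m}(\hX,\hY)$. Exactly as in the proof of Proposition~\ref{pcofiber}, the cofiber $\hC_{m}$ of $\eta_{m-1}:\xi_{m-1}\tau_{m-1}\hX\to\tau_{m}\hX$ is trivial away from the stratum $\tSH{m}$, and every map in $\tSH{m}$ is an isomorphism (all maps of $\EH{m}$ between stratum objects are isomorphisms by \S \ref{socat}, and $\tU{m}$ sends each $\cp{K}{a}$ to the isomorphism $\co{K}{a}$), so passing to the skeleton $\sk\tSH{m}$ of Definition~\ref{dskelehm} gives
$$
F_{m}(\hX,\hY)~\cong~\prod_{G/L\in\sk\tSH{m}}~\map_{\bWHL}(\hC_{m}(G/L),\,\hY(G/L))~,
$$
the automorphism group of $\ell^{\ast}:G/L\to G/H$ in $\tEH{m}$ being $\bWHL$ (cf.\ \S \ref{sehga}). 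Again as in Proposition~\ref{pcofiber}, $\hC_{m}(\ell^{\ast})$ is the (homotopy) cofiber of $\bX_{L}\hra\bX^{L}$, i.e.\ $\XLL$, with the $\bWHL$-action induced from the one on $\bX^{L}$; meanwhile $\hY(G/L)=\bY^{N_{G}H\cap L}\simeq\bK(M_{N_{G}H\cap L},n)$ carries a $\bWHL$-action via $\bWHL\hra W_{N_{G}H\cap L}$ (Remark~\ref{rwhsystem}), realizing $\pi_{n}$ as the $\bWHL$-module $M_{N_{G}H\cap L}$. Since $\bWHL\leq\WL$ acts freely on $\XLL\setminus\{x^{L}_{0}\}$ by Fact~\ref{ffree}, Lemma~\ref{lfreeptact} --- with Remark~\ref{rchangegroup} used to replace $\bE\bWHL$ by $\bE\WL$ --- yields, for $0\leq i<n$,
$$
\pi_{i}\map_{\bWHL}(\XLL,\,\bK(M_{N_{G}H\cap L},n))~\cong~
\tH{n-i}{\bWHL}{\bE\WL\times_{\bWHL}\XLL}{M_{N_{G}H\cap L}}~,
$$
which is the analogue of Lemma~\ref{lloccoh}.

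Finally, as in Theorem~\ref{tss}, replacing $n$ by $n-1$ merely applies $\Omega$ to $\hY$ and hence to the whole tower, so the spectral sequences for different $n$ agree; assembling the previous paragraph with the tower spectral sequence and reindexing exactly as there (setting $t:=n-i$) gives the asserted first-quadrant spectral sequence, the direct sums being finite because $G$ is. I expect the genuine work to be the bookkeeping of the second paragraph --- verifying that the formal apparatus of \S \ref{cfoc}, set up there for $\OG\op$ and \wref[,]{eqfilter} transfers word for word to $\tEH{k}$ and \wref[,]{eqfilterd} in particular the computation of the left adjoints to the restrictions $\tEH{m-1}\hra\tEH{m}$ and the fact that $\tSH{m}$ consists of isomorphisms only --- together with the single genuinely geometric point in the third paragraph, namely the identification of $\hC_{m}(\ell^{\ast})$ with $\XLL$ carrying the correct $\bWHL$-action, which is free off the basepoint precisely because $\bWHL\leq\WL$.
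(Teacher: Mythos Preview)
Your proposal is correct and follows essentially the same approach as the paper's own proof: both build the tower of fibrations from the filtration \wref{eqfilterd} on $\tEH{k}$, identify its limit with $\map_{\WH}(\bX^{H},\bY)$ via Lemma~\ref{lmapspace}, and compute the fibers as products over $\sk\tSH{m}$ of $\bWHL$-equivariant mapping spaces from $\XLL$ into $\bK(M_{N_{G}H\cap L},n)$, then apply Fact~\ref{ffree} and Lemma~\ref{lfreeptact}. The only cosmetic difference is that the paper describes the fiber $\hat{F}_{m}$ directly as the subspace of diagram maps vanishing on $\tEH{m-1}$ and argues that such maps factor through $\XLL$, whereas you phrase the same computation via the cofiber $\hC_{m}$ in the style of Proposition~\ref{pcofiber}; these are equivalent descriptions of the same object.
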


\begin{proof}
For \w{\hX,\hY:\tEH{k}\to\Top} of Definition \ref{dfiltdiag}, as in \S
\ref{sstrata}, we can restrict $\hX$ and $\hY$ along
\wref{eqfilterd} to obtain \w[.]{\tau_{m}\hX,\tau_{m}\hY:\tEH{m}\to\Top}
By Corollary \ref{ccofb}, when $\bX$ is a $G$-CW complex, $\uX$ and
thus \w{\tau_{m}\uX} are cellular, and thus cofibrant in \w{\TOG} and
\w[,]{\Top^{\F_{m}}} respectively. Applying Proposition
\ref{pcellular} we deduce that \w{\hX:=\uX\circ\tU{k}} and each
\w{\tau_{m}\hX:=\uX\circ\tU{m}} \wb{m\leq k} is cofibrant,
too.

As in \S \ref{sstrata} the truncation functor \w{\TE{m+1}\to\TE{m}}
has a left adjoint \w[.]{\zeta_{m}:\TE{m}\to\TE{m+1}} As in the proof
of Proposition \ref{pcofib}, one can show that the counit
\w{\varepsilon_{m-1}:\zeta_{m-1}\tau_{m-1}\hX\to\tau_{m}\hX} is a
cellular inclusion, and thus a cofibration, in \w[.]{\TE{m}} Therefore,
applying \w{\map_{\TE{m}}(-,\tau_{m}\hY)} to \w{\varepsilon_{m-1}}
yields a fibration
$$
\varepsilon_{m-1}^{\ast}:\map_{\TE{m}}(\tau_{m}\hX,\tau_{m}\hY)\to
\map_{\TE{m}}(\zeta_{m-1}\tau_{m-1}\hX,\tau_{m}\hY)=
\map_{\TE{m-1}}(\tau_{m-1}\hX,\tau_{m-1}\hY)
$$
\noindent for each \w[.]{m\leq k} We therefore obtain a (finite) tower
of fibrations
$$
\map(\tau_{k}\hX,\tau_{k}\hY)~\xra{p_{k}}~\dotsc~
\map(\tau_{m}\hX,\,\tau_{m}\hY)~\xra{p_{m}}~
\map(\tau_{m-1}\hX,\,\tau_{m-1}\hY)~\to~\dotsc~
$$
\noindent (mapping spaces in \w[),]{\TE{m}} as in \wref[,]{eqtower}
with successive fibers \w[.]{\hat{F}_{m}(\hX,\hY)}

The homotopy spectral sequence for this tower converges to the
homotopy groups of \w[,]{\map(\tau_{k}\hX,\tau_{k}\hY)=\map(\hX,\hY)}
which are the required Bredon cohomology groups by Lemma
\ref{lmapspace} and \wref[.]{eqelmendorf} The \ww{E_{1}}-term of the
spectral sequence can be described in terms of the homotopy groups of
the fibers \w[,]{\hat{F}_{m}(\hX,\hY)} which we identify as follows:

By definition,
$$
\hat{F}_{m}(\hX,\hY)=\{\hf\in\map_{\tEH{m}}(\hX,\hY)~:\ \tau_{m-1}\hf=0\}~.
$$
\noindent and since \w{\map_{\tEH{m}}(\hX,\hY)} is a subspace of
\w[,]{\prod_{\vartheta\op\in\tEH{m}}~\map_{\Top}(\hX(\vartheta\op),\,
\hY(\vartheta\op))}
we see that \w{\hat{F}_{m}(\hX,\hY)} is a subspace of
\w[,]{\prod_{\vartheta\op\in\tSH{m}}~\map_{\Top}(\hX(\vartheta\op),\,
\hY(\vartheta\op))} and in fact, it suffices to consider the factors
indexed by objects in a skeleton of \w[.]{\tSH{m}} Note that any map
of diagrams must be equivariant with respect to the action of the
automorphisms of such objects, so:
$$
\hat{F}_{m}(\hX,\hY)~\subseteq~
\prod_{G/L\in\sk\tSH{m}}~~\map_{\Aut_{\tSH{m}}(G/L)}(\hX(G/L),~\hY(G/L))~.
$$

In our case \w[,]{\hX(G/L)=\bX^{L}} \w[,]{\hY(G/L)=\bK(M_{N_{G}H\cap L},n)} and
\w[,]{\Aut_{\tSH{m}}(G/L)=\bWHL\subseteq\WL} so
$$
\hat{F}_{m}(\hX,\hY)\subseteq
\prod_{G/L\in\sk\tSH{m}}~\map_{\bWHL}(\bX^L,\,\bK(M_{N_{G}H\cap L},n))~.
$$

However, in order for a collection of \ww{\bWHL}-maps
\w{f_{L}:\bX^L\to\bK(M_{N_{G}H\cap L},n)} \wb{G/L\in\sk\tSH{m}} to yield a map
of diagrams \w{\hf:\hX\to\hY} in \w[,]{\tEH{m}} we need the
restrictions to \w{\hX(G/K)=\bX^{K}} to vanish for any
\w[.]{K\geq L} Since \w{\bX_{L}=\bigcup_{K>L}~\bX^{K}} is a
\ww{\bWHL}-invariant subspace of \w[,]{\bX^{L}} this is equivalent to
each map \w{f_{L}} inducing a (pointed) \ww{\bWHL}-map
\w{\XLL=\bX^{L}/\bX_{L}\to\bK(M_{N_{G}H\cap L},n))}

Therefore,
$$
\hat{F}_{m}(\hX,\hY)~\cong~\prod_{G/L\in\sk\tSH{m}}~
\map_{\WLH}(\XLL,\,\bK(M_{H},n))~.
$$
\noindent where \w{\WLH} acts on \w{\bK(M_{N_{G}H\cap L},n)}
via the quotient map \w{\pi:\WLH\to\bWHL} as a subgroup of
\w[,]{W_{N_{G}H\cap L}} and on \w{\XLL} via $\pi$.

From Fact \ref{ffree} and Lemma \ref{lfreeptact} we deduce:
$$
\pi_{i}\hat{F}_{m}(\hX,\hY)~\cong~\bigoplus_{G/L\in\sk\tSH{m}}\
\tH{n-i}{\WLH}{\bE\WL\times_{\bWHL}\XLL}{M_{N_{G}H\cap L}}~.
$$
\end{proof}

\begin{remark}\label{rtwoss}
Given a $G$-CW complex $\bX$ and a coefficient system $\uM$, we
have two spectral sequences for computing the Bredon cohomology groups
\w[,]{\HB{\ast}{\WH}{\bX^{H}}{\hM}} given by Theorems \ref{tss} and
\ref{tssfps}, respectively, both starting with reduced
cohomology groups with local coefficients. However, they are not identical, since the modified fixed point sets appearing in the \ww{E_{1}}-term of
the first spectral sequence are restricted to
\w{(\bX^{H})\sp{\bar{L}}\sb{\bar{L}}} for \w[,]{\bar{L}\leq\WH} while
in the second spectral sequence we allow all \w{\XLL} for
\w[.]{H\leq L\leq G} Note, however, that if \w[,]{N_{G}H\leq L} then
\w{\bWHL} is trivial, so by \wref{eqsseseq} the reduced
cohomology is just ordinary (non-equivariant) cohomology.
\end{remark}

\begin{remark}\label{rhomology}
For finite $G$ and any $G$-CW complex $\bX$, Bredon also defined its cellular equivariant homology groups, and this definition was extended to arbitrary $G$-spaces by Br\"{o}cker in \cite{BrockS}, and to arbitrary topological groups by
Willson in \cite{WillsE}. However, these constructions are in terms of appropriate chain complexes, rather than mapping spaces of diagrams as in \wref[,]{eqbredon}
so our methods do not yield analogous spectral sequences in Bredon homology.
\end{remark}

\end{document}